\documentclass[a4paper,11pt]{article}
\usepackage{amsmath,indentfirst,amsfonts,dsfont,amssymb,amsthm,graphicx,enumerate,mathrsfs,color}
\usepackage{algorithm}               
\usepackage{algorithmic}             

\newtheorem{definition}{Definition}
\newtheorem{theorem}{Theorem}
\newtheorem{lemma}{Lemma}

\newtheorem{remark}{Remark}

\newtheorem{conjecture}{Conjecture}
\newtheorem{question}{Question}
\hoffset -19.4mm \textwidth 6.35in \textheight 8.30in

\title{\bf  Positive Definite Tensors to Nonlinear Complementarity Problems}
\author{
Maolin Che\thanks{E-mail: 14110180046@fudan.edu.cn and chnmaolinche@gmail.com. School of
Mathematical Sciences, Fudan University, Shanghai, 200433, P. R. of
China.  This author is supported by the National Natural Science
Foundation of China under grant 11271084.} \and Liqun Qi \thanks{ E-mail: maqilq@polyu.edu.hk. Department of Applied Mathematics, The Hong Kong Polytechnic University, Hung Hom, Kowloon,
Hong Kong. This author's work was supported by the Hong Kong Research
Grant Council (Grant No. PolyU 502510, 502111, 501212, 501913).} \and Yimin Wei\thanks{
Corresponding author (Y. Wei).
E-mail: ymwei@fudan.edu.cn and yimin.wei@gmail.com. School of
Mathematical Sciences and Shanghai Key Laboratory of Contemporary
Applied Mathematics, Fudan University, Shanghai, 200433, P. R. of
China. This author is supported by the National Natural Science
Foundation of China under grant 11271084.}}
\date{\today}

\begin{document}
\maketitle
\begin{abstract}
The main purpose of this note is to investigate some kinds of nonlinear complementarity problems (NCP). For the structured tensors, such as, symmetric positive definite tensors and copositive tensors, we derive the existence theorems on a solution of these kinds of nonlinear complementarity problems. We prove that a unique solution of the NCP exists under the condition of  diagonalizable tensors.
  \bigskip

  {\bf Keywords:} Copositive tensor; Symmetric tensor; Positive definite tensor; Diagonalizable tensors; Nonlinear complementarity problem.

  \bigskip

  {\bf AMS subject classifications:} 15A18, 15A69, 65F15, 65F10
\end{abstract}
\newpage
\section{Introduction}
Let $F$ be a mapping from $\mathds{R}^{n}$ into itself. The nonlinear complementarity problem, denoted by $\mathrm{NCP}(F)$, is to find a vector $\mathbf{x}^{*}\in \mathds{R}_{+}^{n}$ such that
\begin{equation*}
F(\mathbf{x}^{*})\in \mathds{R}_{+}^{n},\quad F(\mathbf{x}^{*})^{\top}\mathbf{x}=0.
\end{equation*}
When $F(\mathbf{x})$ is an affine function of $\mathbf{x}$, say $F(\mathbf{x})=\mathbf{q}+M\mathbf{x}$ for some given vectors $\mathbf{q}\in \mathds{R}^{n}$ and matrix $M\in\mathds{R}^{n\times n}$, the problem $\mathrm{NCP}(F)$ reduces to the linear complementarity problem, which is denoted by $\mathrm{LCP}(\mathbf{q},M)$. The results of the linear complementarity problem can be found in the references (\cite{Berman, lcp}).

A further generalization of the $\mathrm{NCP}(F)$ is the variational inequality problem: given a mapping $F:\mathds{R}^{n}\rightarrow\mathds{R}^{n}$, and $\emptyset\neq \mathfrak{K}\subseteq\mathds{R}^{n}$, find a $\mathbf{x}^{*}\in \mathfrak{K}$ satisfying
\begin{equation*}\label{e1}
(\mathbf{y}-\mathbf{x}^{*})^{\top}F(\mathbf{x}^{*})\geq0, \quad\text{for all }\mathbf{y}\in \mathfrak{K},
\end{equation*}
denoted by $\mathrm{VI}(\mathfrak{K},F)$.

If $\mathfrak{K}=\{\mathbf{x}:\mathbf{x}\geq {\bf 0}\}$, then a $\mathbf{x}^{*}$ is a solution of  $\mathrm{VI}(\mathfrak{K},F)$, solves the $\mathrm{NCP}(F)$.

Over the past decade, the research of finite-dimensional variational inequality and complementarity problems (\cite{ncpv1, ncpv2, NCP,cp, strict, Jacobians_ncp,NCPalgorithm}) has been rapidly developed in the theory of existence, uniqueness and sensitivity of solutions, theory of algorithms, and the application of these techniques to transportation planning, regional science, socio-economic analysis, energy modeling and game theory.

Qi \cite{Qiliqun}  defined two kinds of eigenvalues
and described some relative results similar to the matrix
eigenvalues. Lim \cite{Lim}  proposed another definition
of eigenvalues, eigenvectors, singular values, and singular vectors
for tensors based on a constrained variational approach, much like
the Rayleigh quotient for symmetric matrix eigenvalues, independently.

It is well-known that $A$ is a $P$ matrix (\cite{Berman}), if and only if the linear complementarity
problem,
$$
\rm{find}~~ z \in \mathds{R}^n
~~\rm{such~ that}~~ z \geq  \mathbf{0}, \quad q + Az \geq \mathbf{0}, ~~\rm{and}~~ z^{\top}(q + Az) = 0.
$$
has a unique solution for all $q \in \mathds{R}^n$. Then for a $P$ tensor (\cite{SQ}) $\mathcal{A}\in T_{m,n}, (m > 2)$, does a similar
property hold for the following nonlinear complementarity problem,
$$
\rm{find}~~ x \in \mathds{R}^n
~~\rm{such~ that}~~ x \geq  \mathbf{0}, \quad q + \mathcal{A} x^{m-1} \geq \mathbf{0}, ~~\rm{and}~~ x^{\top}(q + \mathcal{A} x^{m-1} ) =0?
$$

In this note, we will consider a special kind of $\mathrm{NCP}(F)$, that is, $F_{i}(\mathbf{x})$ is a multivariate polynomial and the degree of $F_{i}(\mathbf{x})$ is $k_{i}$, then $F(\mathbf{x})$ can be expressed by,
\begin{equation*}
F(\mathbf{x})=\sum\limits_{i=1}^{k}\mathcal{A}_{i}\mathbf{x}^{i-1},
\end{equation*}
where $\mathcal{A}_{i}\in T_{i-1,n}$, $\mathcal{A}_{i}\mathbf{x}^{i-1}$ means the tensor-vector product given in Section 2 and $k=\max\limits_{1\leq i\leq n}k_{i}$. Specially, $\mathcal{A}_{1}$ is a vector and $\mathcal{A}_{2}$ is a matrix.

Throughout this paper, we assume that $m,\ n\ ( \geq 2)$ are positive integers and
$m$ is even. We use small letters $x,u,v,\dots,$ for scalers,
small bold letters $\mathbf{x},\mathbf{u},
\mathbf{v},\dots,$ for vectors, capital letters
$A,B,C,\dots,$ for matrices, calligraphic letters
$\mathcal{A},\mathcal{B},\mathcal{C},\dots,$ for tensors, and $\mathfrak{A},\mathfrak{B},\mathfrak{C},\dots,$ for the subsets in $\mathds{R}^{n}$. Denote
$[n]=\{1,2,\dots,n\}$. $\mathbf{0}$ means a column vector in $\mathds{R}^{n}$, where its all entries are zeros. $\mathds{R}_{+}^{n}$ denotes the nonnegative orthant of $\mathds{R}^{n}$.

Let $\mathds{R}$ be a real field. An order $m$ dimension $n$ tensor $\mathcal{A}$ consists of $n^{m}$ real entries. Let $T_{m,n}$ denote all order $m$ dimension $n$ tensors where every entry of each tensor in $T_{m,n}$ is in $\mathds{R}$. $\mathcal{D}\in T_{m,n}$ is diagonal if all off-diagonal entries are zero. Particularly, when the diagonal entries of $\mathcal{D}$ are 1, then $\mathcal{D}$ is called the identity tensor and denote it as $\mathcal{I}$.

The rest of
this note is organized as follows. Section 2 introduces definitions
of basic preliminaries and some results of the nonlinear complementarity problems. The main result in Section 3 is to consider the solvability of Questions \ref{q1} and \ref{q2}.
 We  conclude our
paper in Section 4.
\section{Preliminaries}
In this section, we present five definitions about tensors and nonlinear mappings, four lemmas associated with these definitions and the solutions of the nonlinear complementarity problems, and two questions proposed as follows.
\subsection{Definitions}
The mode-$k$ tensor-matrix product and mode-$k$ tensor-vector
product  of $\mathcal{A}\in T_{m,n}$ are
defined as follows.
\begin{definition}{\bf (\cite{Kolda})}\label{d1} The mode-$k$ product of a tensor $\mathcal{A}\in T_{m,n}$ and  a matrix $B\in \mathds{R}^{n\times n}$, denoted
by $\mathcal{A}\times_{k}B$, is a tensor $\mathcal{C}\in T_{m,n}$ of
which are given by
$$\mathcal{C}_{i_{1}\dots i_{k-1}ji_{k+1}\dots i_{m}}=\sum_{i_{k}=1}^{n}\mathcal{A}_{i_{1}\dots i_{k-1}i_{k}i_{k+1}\dots i_{m}}b_{ji_{k}},
\quad k\in [m].$$
Particularly, the mode-$k$ multiplication of a tensor $\mathcal{A}\in T_{m,n}$ and a vector $\mathbf{x}\in \mathds{R}^{n}$ is denoted by
$\mathcal{A}\bar{\times}_{k}\mathbf{x}$. Set $\mathcal{C}=\mathcal{A}\bar{\times}_{k}\mathbf{x}$, then, element-wise, we have
\begin{equation*}
\mathcal{C}_{i_{1}\dots i_{k-1}i_{k+1}\dots i_{m}}=\sum\limits_{i_{k}=1}^{n}\mathcal{A}_{i_{1}\dots i_{k-1}i_{k}i_{k+1}\dots i_{m}}x_{i_{k}}.
\end{equation*}
\end{definition}
According to Definition \ref{d1}, let $m$ vectors $\mathbf{x}_{k}\in \mathds{R}^{n}$, the formula
$\mathcal{A}\bar{\times}_{1}\mathbf{x}_{1}\dots\bar{\times}_{m}\mathbf{x}_{m}$ is easy to define.
If these $m$ vectors are the same vector, denoted by $\mathbf{x}$, then $\mathcal{A}\bar{\times}_{1}\mathbf{x}\dots\bar{\times}_{m}\mathbf{x}$ can be simplified as $\mathcal{A}\mathbf{x}^{m}$.

Our next definition is motivated by the class of copositive matrices (\cite{Berman}), which in turn generalizes that of nonnegative matrices.
\begin{definition}{\bf (\cite{NCP})}\label{d2}
A mapping $F:\mathfrak{X} \rightarrow\mathds{R}^{n}$ is said to be
\begin{enumerate}
\item[\rm (a)] Copositive with respect to $\mathfrak{X}$, if
\begin{equation*}
[F(\mathbf{x})-F(\mathbf{0})]^{\top}\mathbf{x}\geq 0,\quad \forall\mathbf{x}\in \mathfrak{X}.
\end{equation*}
\item[\rm(b)] Strictly copositive with respect to $\mathfrak{X}$, if
\begin{equation*}
[F(\mathbf{x})-F(\mathbf{0})]^{\top}\mathbf{x}> 0,\quad \forall\mathbf{x}\in \mathfrak{X},\ \mathbf{x}\neq\mathbf{0}.
\end{equation*}
\item[\rm (c)] Strongly copositive with respect to $\mathfrak{X}$, if there exists a positive scalar $\alpha$ such that
\begin{equation*}
[F(\mathbf{x})-F(\mathbf{0})]^{\top}\mathbf{x}\geq \alpha\|\mathbf{x}\|_{2}^{2},\quad \forall\mathbf{x}\in \mathfrak{X}.
\end{equation*}
\end{enumerate}
\end{definition}
The definition of a symmetric tensor (\cite{Lim, Qiliqun}) is stated as follows.
\begin{definition}\label{d3} Suppose that $\mathcal{A}\in T_{m,n}$. $\mathcal{A}$ is called symmetric if $\mathcal{A}_{i_{1}i_{2}\dots i_{m}}$ is
invariant by any permutation $\pi$, that is,
$\mathcal{A}_{i_{1}i_{2}\dots
i_{m}}=\mathcal{A}_{\pi(i_{1},i_{2},\dots,i_{m})}$ where all
$i_{k}\in[n]$ with $k\in[m]$.
We denote all symmetric tensors by
$ST_{m,n}$.
\end{definition}
When $m$ is even and $\mathcal{A}$ is symmetric, we say that
\begin{enumerate}
\item[\rm (a)]
$\mathcal{A}$ is positive definite (\cite{Qiliqun}), if $\mathcal{A}\mathbf{x}^{m}>0$ for all $\mathbf{x}\in\mathds{R}^{n}$ and $\mathbf{x}\neq\mathbf{0}$,
\item[\rm (b)]
 $\mathcal{A}$ is a copositive tensor (\cite{Copositive}), if for any $x\in\mathds{R}_{+}^{n}$, then $\mathcal{A}\mathbf{x}^{m}\geq 0$,
\item[\rm (c)]
   $\mathcal{A}$ is a strictly copositive tensor (\cite{Copositive}), if for any $x\in\mathds{R}_{+}^{n}$, then $\mathcal{A}\mathbf{x}^{m}> 0$.
\end{enumerate}
    The set of all positive definite tensors is denoted by $SPT_{m,n}$.

The mapping
\begin{equation}\label{e2}
G(\mathbf{x})=F(\mathbf{x})-F(\mathbf{0})
 \end{equation}
 plays an important role in the nonlinear complementarity problem (\cite{NCP}), this again is motivated by the linear complementarity problem.

The strict copositivity of a mapping can be relaxed through the introduction of the class of d-regular mappings.
\begin{definition} {\bf (\cite{NCP})}\label{d4}
For any vector $\mathbf{x}\in\mathds{R}_{+}^{n}$, we define the index sets,
\begin{equation*}
I_{+}(\mathbf{x})=\{i:x_{i}>0\}\quad\text{and}\quad I_{0}(\mathbf{x})=\{i:x_{i}=0\}.
\end{equation*}
Let $\mathbf{d}>\mathbf{0}$ be an arbitrary vector in $\mathds{R}^{n}$. A mapping $G:\mathds{R}^{n}\rightarrow\mathds{R}^{n}$ is said to be d-regular,
 if the following system of equations has no solution in $(\mathbf{x},t)\in\mathds{R}_{+}^{n}\times\mathds{R}_{+}$ with $\mathbf{z}\neq\mathbf{0}$,
\begin{equation}\label{e3}
\begin{split}
G_{i}(\mathbf{x})+td_{i}&=0,\quad i\in I_{+}(\mathbf{x}),\\
G_{i}(\mathbf{x})+td_{i}&\geq0,\quad i\in I_{0}(\mathbf{x}).
\end{split}
\end{equation}
Equivalently, $G$ is d-regular if, for any scalar $r>0$, the augmented nonlinear complementarity problem $NCP(H)$ defined by $H:\mathds{R}^{n+1}\rightarrow\mathds{R}^{n+1}$,
\begin{equation*}
H\begin{pmatrix}\mathbf{x}\\ t \end{pmatrix}=\begin{pmatrix}G(\mathbf{x})+t\mathbf{d}\\ r-\mathbf{d}^{\top}\mathbf{x}\end{pmatrix},
\end{equation*}
has no solution $(\mathbf{x},t)$ with $\mathbf{x}\neq\mathbf{0}$.
\end{definition}
Similar to the diagonalizable matrices (\cite{Golub}), the definition of the diagonalizable tensors is presented as follows.
\begin{definition}\label{d5}
Suppose that $\mathcal{A}\in ST_{m,n}$. $\mathcal{A}$ is called diagonalizable if $\mathcal{A}$ can be represented as
\begin{equation*}
\{\mathcal{A}\in T_{m,n}|\mathcal{A}=\mathcal{D}\times_{1}B\times_{2}B\dots\times_{m}B\},
\end{equation*}
where $B\in\mathds{R}^{n\times n}$ with $\det(B)\neq 0$ and $\mathcal{D}$ is a diagonal tensor. Denote all diagonalizable tensors by $D_{m,n}$.
\end{definition}
\subsection{Lemmas}
The following lemma is an existence and uniqueness theorem by Cottle (\cite{Jacobians_ncp}). It involves the notion of positively bounded Jacobians, and the original proof was constructive, in the sense that an algorithm was employed to actually compute the unique solution.
\begin{lemma}{\bf (\cite{Jacobians_ncp, NCP})}\label{l1}
Let $F:\mathds{R}_{+}^{n}\rightarrow\mathds{R}^{n}$ be continuously differentiable and suppose that there exists one $\delta\in(0,1)$, such that all principal minors of the Jacobian matrix $\nabla F(\mathbf{x})$ are bounded between $\delta$ and $\delta^{-1}$, for all $\mathbf{x}\in\mathds{R}_{+}^{n}$. Then the $\mathrm{NCP}(F)$ has a unique solution.
\end{lemma}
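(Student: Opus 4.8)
The plan is to read the hypothesis as a \emph{uniform $P$-matrix} condition on the Jacobian and then to split the claim into its uniqueness and existence halves, which rest on quite different ideas. Since every principal minor of $\nabla F(\mathbf{x})$ lies in $[\delta,\delta^{-1}]$ for all $\mathbf{x}\in\mathds{R}_{+}^{n}$, in particular each such minor is strictly positive, so $\nabla F(\mathbf{x})$ is a $P$-matrix at every point; the uniform lower bound $\delta$ is what will upgrade qualitative statements to the quantitative estimates needed for existence.

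For uniqueness I would first invoke the Mor\'e--Rheinboldt (Gale--Nikaido type) univalence result: a continuously differentiable $F$ whose Jacobian is a $P$-matrix at every point of the rectangular region $\mathds{R}_{+}^{n}$ is a $P$-function, meaning $\max_{i}(x_{i}-y_{i})(F_{i}(\mathbf{x})-F_{i}(\mathbf{y}))>0$ whenever $\mathbf{x}\neq\mathbf{y}$. Suppose $\mathbf{x}$ and $\mathbf{y}$ were two solutions of $\mathrm{NCP}(F)$. Then $x_{i}F_{i}(\mathbf{x})=0$ and $y_{i}F_{i}(\mathbf{y})=0$ for every $i$, so for each index $i$
\begin{equation*}
(x_{i}-y_{i})(F_{i}(\mathbf{x})-F_{i}(\mathbf{y}))=-x_{i}F_{i}(\mathbf{y})-y_{i}F_{i}(\mathbf{x})\leq 0,
\end{equation*}
because $x_{i},y_{i},F_{i}(\mathbf{x}),F_{i}(\mathbf{y})$ are all nonnegative. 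Taking the maximum over $i$ contradicts the $P$-function inequality unless $\mathbf{x}=\mathbf{y}$, which settles uniqueness.

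Existence is where the real work lies, and it is the step I expect to be the main obstacle. My plan is to convert the uniform bound into \emph{coercivity}: the lower bound $\delta$ on all principal minors makes $F$ a \emph{uniform} $P$-function, which in turn forces the natural residual map $\mathbf{x}\mapsto\min(\mathbf{x},F(\mathbf{x}))$ (or the associated Fischer--Burmeister merit function) to have bounded level sets, so that no approximate solution can escape to infinity. With properness in hand I would apply a topological degree argument: using homotopy invariance of the degree along a path joining $F$ to a trivially solvable map, the positivity of the principal minors guarantees that the degree of the NCP reformulation is nonzero on a sufficiently large ball, which yields a zero, i.e.\ a solution. This is precisely the content that Cottle's original constructive proof supplies through a parametric principal-pivoting scheme, in which the two-sided bound $[\delta,\delta^{-1}]$ keeps the computed path bounded and bounded away from degeneracy, so the algorithm terminates at a solution. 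The delicate points are establishing the coercivity estimate cleanly from the minor bound and verifying that the degree (equivalently, the pivoting path) is well defined throughout; the uniqueness already proved then guarantees that the solution so produced is the only one.
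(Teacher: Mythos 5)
The paper itself offers no proof of this lemma: it is imported verbatim from Cottle and the Harker--Pang survey, and the authors only remark that the original proof was constructive, computing the unique solution by an algorithm. So your attempt must stand on its own. Its uniqueness half does: $\mathds{R}_{+}^{n}$ is a rectangle, so the Mor\'e--Rheinboldt theorem applies to turn the pointwise $P$-matrix property of $\nabla F$ into the $P$-function property, and your componentwise identity $(x_{i}-y_{i})(F_{i}(\mathbf{x})-F_{i}(\mathbf{y}))=-x_{i}F_{i}(\mathbf{y})-y_{i}F_{i}(\mathbf{x})\leq 0$ for two solutions correctly forces $\mathbf{x}=\mathbf{y}$.

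The existence half, however, contains a genuine gap: the asserted implication that the bound $\delta\leq$ (all principal minors) $\leq\delta^{-1}$ makes $F$ a \emph{uniform} $P$-function is false. Take $n=2$ and $F(x_{1},x_{2})=\bigl(x_{1}+g(x_{2})+q_{1},\ x_{2}+q_{2}\bigr)$ with $g$ smooth, $g'\geq 0$ unbounded on $\mathds{R}_{+}$. Then $\nabla F(\mathbf{x})=\begin{pmatrix}1 & g'(x_{2})\\ 0 & 1\end{pmatrix}$ has \emph{all} principal minors identically equal to $1\in[\delta,\delta^{-1}]$, yet with $z=(1,-\epsilon)^{\top}$ and $\epsilon=2/g'(x_{2})$ one gets $\max_{i}z_{i}(\nabla F(\mathbf{x})z)_{i}=\epsilon^{2}\rightarrow 0$ while $\|z\|_{2}\approx 1$; at the function level, choosing $x_{1}-y_{1}=g(x_{2}')-g(x_{2})$ with $x_{2}'\rightarrow x_{2}$ near points of large slope shows $\max_{i}(x_{i}-y_{i})(F_{i}(\mathbf{x})-F_{i}(\mathbf{y}))/\|\mathbf{x}-\mathbf{y}\|_{2}^{2}\rightarrow 0$, so no uniform modulus exists. (This $F$ still has a unique NCP solution, consistent with the lemma --- it refutes only your intermediate claim.) Consequently your coercivity of the natural residual does not follow as derived, and the subsequent degree argument rests on it; the closing appeal to Cottle's ``parametric principal-pivoting scheme'' concedes rather than supplies the missing step. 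In Cottle's actual proof the two-sided minor bound enters through termination and nondegeneracy of the pivoting path on linearizations, not through a uniform-$P$ coercivity estimate, so to complete your route you would need either to add a hypothesis bounding the off-diagonal Jacobian entries (the minor bounds bound the diagonal entries and the products $a_{ij}a_{ji}$, but not $|a_{ij}|$ individually, as the example shows) or to replace the coercivity step with a genuinely different argument.
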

If mapping $F$ is strictly copositive, then the following result holds.
\begin{lemma}{\bf (\cite{strict})}\label{l2}
Let $F:\mathds{R}_{+}^{n}\rightarrow\mathds{R}^{n}$ be continuous and strictly copositive with respect to $\mathds{R}_{+}^{n}$. If there exists a mapping $c:\mathds{R}_{+}\rightarrow\mathds{R}$ such that $c(\lambda)\rightarrow\infty$ as $\lambda\rightarrow\infty$, and for all $\lambda\geq 1$, $\mathbf{x}\geq\mathbf{0}$,
\begin{equation}\label{e4}
[F(\lambda\mathbf{x})-F(\mathbf{0})]^{\top}\mathbf{x}\geq c(\lambda)[F(\mathbf{x})-F(\mathbf{0})]^{\top}\mathbf{x},
\end{equation}
then the problem $\mathrm{NCP}(F)$ has a nonempty, compact solution set.
\end{lemma}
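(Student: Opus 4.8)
The plan is to follow the classical truncation-and-coercivity scheme for complementarity problems. First I would reduce the question to a family of problems on compact sets: for each radius $R>0$, set $\mathfrak{K}_{R}=\{\mathbf{x}\in\mathds{R}_{+}^{n}:\|\mathbf{x}\|_{2}\leq R\}$, which is nonempty, convex and compact, so by the classical existence theorem for variational inequalities on a compact convex set (an application of Brouwer's fixed-point theorem) the problem $\mathrm{VI}(\mathfrak{K}_{R},F)$ admits a solution $\mathbf{x}^{R}$, i.e. $(\mathbf{y}-\mathbf{x}^{R})^{\top}F(\mathbf{x}^{R})\geq 0$ for all $\mathbf{y}\in\mathfrak{K}_{R}$.

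Next I would record two elementary consequences. Taking $\mathbf{y}=\mathbf{0}$ gives $(\mathbf{x}^{R})^{\top}F(\mathbf{x}^{R})\leq 0$. Writing $G(\mathbf{x})=F(\mathbf{x})-F(\mathbf{0})$ as in \eqref{e2}, this reads $(\mathbf{x}^{R})^{\top}G(\mathbf{x}^{R})\leq -(\mathbf{x}^{R})^{\top}F(\mathbf{0})\leq R\|F(\mathbf{0})\|_{2}$. On the other hand, if $\|\mathbf{x}^{R}\|_{2}<R$ then every small perturbation $\mathbf{x}^{R}\pm\varepsilon\mathbf{e}_{i}$ that respects nonnegativity still lies in $\mathfrak{K}_{R}$; feeding these into the variational inequality yields $F_{i}(\mathbf{x}^{R})\geq 0$ for all $i$ and $F_{i}(\mathbf{x}^{R})=0$ whenever $x_{i}^{R}>0$, so that $\mathbf{x}^{R}$ already solves $\mathrm{NCP}(F)$. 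Thus it suffices to show that the truncated solutions cannot be forced onto the boundary sphere $\|\mathbf{x}^{R}\|_{2}=R$ once $R$ is large.

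The coercivity estimate is where strict copositivity and the growth hypothesis \eqref{e4} combine, and this is the heart of the argument. Since the unit cap $S=\{\mathbf{u}:\|\mathbf{u}\|_{2}=1,\ \mathbf{u}\geq\mathbf{0}\}$ is compact and $\mathbf{u}\mapsto G(\mathbf{u})^{\top}\mathbf{u}$ is continuous and strictly positive on $S$ by strict copositivity, it attains a minimum $\mu>0$. Writing $\mathbf{x}^{R}=\lambda\mathbf{u}$ with $\lambda=\|\mathbf{x}^{R}\|_{2}$ and $\mathbf{u}\in S$, and applying \eqref{e4} with this $\mathbf{u}$ and this $\lambda\geq 1$, I obtain
\begin{equation*}
(\mathbf{x}^{R})^{\top}G(\mathbf{x}^{R})=\lambda\,G(\lambda\mathbf{u})^{\top}\mathbf{u}\geq \lambda\,c(\lambda)\,G(\mathbf{u})^{\top}\mathbf{u}\geq \lambda\,c(\lambda)\,\mu.
\end{equation*}
Comparing this with the upper bound $R\|F(\mathbf{0})\|_{2}$ from the previous step (and recalling $\lambda=R$ on the boundary) gives $c(R)\,\mu\leq\|F(\mathbf{0})\|_{2}$; since $c(R)\rightarrow\infty$, this inequality fails for all large $R$. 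Hence for $R$ large enough $\|\mathbf{x}^{R}\|_{2}<R$, and $\mathbf{x}^{R}$ solves $\mathrm{NCP}(F)$, proving the solution set is nonempty.

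Finally, for compactness I would run the very same estimate on an arbitrary solution $\mathbf{x}^{*}$ of $\mathrm{NCP}(F)$: complementarity gives $(\mathbf{x}^{*})^{\top}G(\mathbf{x}^{*})=-(\mathbf{x}^{*})^{\top}F(\mathbf{0})\leq\|\mathbf{x}^{*}\|_{2}\|F(\mathbf{0})\|_{2}$, while the coercivity bound gives $(\mathbf{x}^{*})^{\top}G(\mathbf{x}^{*})\geq\|\mathbf{x}^{*}\|_{2}\,c(\|\mathbf{x}^{*}\|_{2})\,\mu$ whenever $\|\mathbf{x}^{*}\|_{2}\geq 1$. Together these force $c(\|\mathbf{x}^{*}\|_{2})\leq\|F(\mathbf{0})\|_{2}/\mu$, so $\|\mathbf{x}^{*}\|_{2}$ is bounded by some $\bar{\lambda}$ (again using $c(\lambda)\rightarrow\infty$). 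Boundedness together with closedness of the solution set, which is immediate from the continuity of $F$, yields compactness. The main obstacle is the coercivity step: one must extract a uniform positive lower bound $\mu$ from strict copositivity via compactness of $S$, and then let the growth factor $c(\lambda)$ overpower the at-most-linear upper bound coming from the complementarity condition.
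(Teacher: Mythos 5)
The paper itself gives no proof of Lemma \ref{l2} --- it is quoted directly from \cite{strict} --- so there is nothing internal to compare against; judged on its own, your proof is correct and is essentially the classical argument behind this result. The Hartman--Stampacchia truncation on $\mathfrak{K}_{R}$, the interior-perturbation step showing that $\|\mathbf{x}^{R}\|_{2}<R$ already yields an $\mathrm{NCP}(F)$ solution, and the key coercivity estimate $\lambda c(\lambda)\mu\leq\lambda\|F(\mathbf{0})\|_{2}$ --- obtained by extracting the uniform bound $\mu>0$ from strict copositivity on the compact cap $S$ and then invoking (\ref{e4}) --- are all sound, and the same estimate applied to an arbitrary solution correctly delivers the boundedness (hence, with closedness from continuity of $F$, compactness) of the solution set.
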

For the linear complementarity problem, the mapping $G$, given in formula (\ref{e2}), is obviously linear and thus, condition (\ref{e4}) is satisfied with $c(\lambda)=\lambda$. More generally, the same condition will hold with $c(\lambda)=\lambda^{\alpha}$, if $G$ is positively homogeneous of degree $\alpha>0$; i.e., if $G(\lambda\mathbf{x})=\lambda^{\alpha}G(\mathbf{x})$ for $\lambda>0$.

If $F$ is strictly copositive with respect to $\mathds{R}_{+}^{n}$, then the mapping $G$ in (\ref{e2}) is d-regular for any $\mathbf{d}>\mathbf{0}$. The following lemma presents an existence result for the nonlinear complementarity problem with d-regular mapping.
\begin{lemma}{\bf (\cite{cp})}\label{l3}
Let $F$ be a continuous mapping from $\mathds{R}^{n}$ into itself and  $G$ defined by (\ref{e2}). Suppose that $G$ is positively homogeneous of degree $\alpha>0$ and that $G$ is d-regular for some $\mathbf{d}>\mathbf{0}$. Then the problem $\mathrm{NCP}(F)$ has a nonempty, compact solution set.
\end{lemma}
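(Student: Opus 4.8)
The plan is to approximate $\mathrm{NCP}(F)$ by a family of variational inequalities over compact convex sets, to solve each one by the classical Hartman--Stampacchia existence theorem, and then to use the $d$-regularity of $G$ together with its positive homogeneity as an a priori bound that forces one of these approximate solutions to actually solve $\mathrm{NCP}(F)$.

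First I would fix $\mathbf{d}>\mathbf{0}$ and, for each scalar $r>0$, set $\mathfrak{K}_{r}=\{\mathbf{x}\in\mathds{R}_{+}^{n}:\mathbf{d}^{\top}\mathbf{x}\leq r\}$. Since $\mathbf{d}>\mathbf{0}$, each $\mathfrak{K}_{r}$ is compact and convex, so the continuity of $F$ guarantees a solution $\mathbf{x}^{r}$ of $\mathrm{VI}(\mathfrak{K}_{r},F)$. Because $\mathfrak{K}_{r}$ is polyhedral, the Karush--Kuhn--Tucker conditions hold without a further constraint qualification: there is a multiplier $t_{r}\geq0$ with
\begin{equation*}
F(\mathbf{x}^{r})+t_{r}\mathbf{d}\geq\mathbf{0},\qquad (\mathbf{x}^{r})^{\top}\bigl(F(\mathbf{x}^{r})+t_{r}\mathbf{d}\bigr)=0,\qquad t_{r}\bigl(r-\mathbf{d}^{\top}\mathbf{x}^{r}\bigr)=0.
\end{equation*}
This is precisely the augmented problem $\mathrm{NCP}(H)$ of Definition \ref{d4}. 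The key observation is that whenever the constraint is inactive, i.e. $\mathbf{d}^{\top}\mathbf{x}^{r}<r$, complementarity forces $t_{r}=0$ and the displayed relations collapse to $F(\mathbf{x}^{r})\geq\mathbf{0}$, $\mathbf{x}^{r}\geq\mathbf{0}$, $(\mathbf{x}^{r})^{\top}F(\mathbf{x}^{r})=0$, so that $\mathbf{x}^{r}$ solves $\mathrm{NCP}(F)$.

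It therefore remains to produce an $r$ with $\mathbf{d}^{\top}\mathbf{x}^{r}<r$, and this is where $d$-regularity enters. Arguing by contradiction, suppose every solution satisfies $\mathbf{d}^{\top}\mathbf{x}^{r}=r$, so that $\|\mathbf{x}^{r}\|\to\infty$ as $r\to\infty$. I would normalise $\mathbf{u}^{r}=\mathbf{x}^{r}/\|\mathbf{x}^{r}\|$ and substitute $F=F(\mathbf{0})+G$, with $G$ positively homogeneous of degree $\alpha$, into the relations above; dividing the first by $\|\mathbf{x}^{r}\|^{\alpha}$ and the complementarity identity by $\|\mathbf{x}^{r}\|^{\alpha+1}$ gives, with $\tau_{r}=t_{r}\|\mathbf{x}^{r}\|^{-\alpha}\geq0$,
\begin{equation*}
G(\mathbf{u}^{r})+\|\mathbf{x}^{r}\|^{-\alpha}F(\mathbf{0})+\tau_{r}\mathbf{d}\geq\mathbf{0},\qquad (\mathbf{u}^{r})^{\top}\Bigl(G(\mathbf{u}^{r})+\|\mathbf{x}^{r}\|^{-\alpha}F(\mathbf{0})+\tau_{r}\mathbf{d}\Bigr)=0.
\end{equation*}
Passing to a subsequence gives $\mathbf{u}^{r}\to\mathbf{u}^{*}$ with $\|\mathbf{u}^{*}\|=1$ and $\mathbf{u}^{*}\geq\mathbf{0}$; since $\mathbf{d}>\mathbf{0}$ the quantity $(\mathbf{u}^{r})^{\top}\mathbf{d}$ stays bounded away from zero, so the complementarity identity bounds $\tau_{r}$ and I may assume $\tau_{r}\to\tau^{*}\geq0$. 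Because each summand of the complementarity identity is nonnegative it vanishes termwise, and letting $\|\mathbf{x}^{r}\|\to\infty$ using continuity of $G$ shows that $(\mathbf{u}^{*},\tau^{*})$ satisfies $G_{i}(\mathbf{u}^{*})+\tau^{*}d_{i}=0$ on $I_{+}(\mathbf{u}^{*})$ and $G_{i}(\mathbf{u}^{*})+\tau^{*}d_{i}\geq0$ on $I_{0}(\mathbf{u}^{*})$. This is exactly system (\ref{e3}) with a nonzero $\mathbf{u}^{*}$, contradicting $d$-regularity. Hence some $\mathbf{x}^{r}$ solves $\mathrm{NCP}(F)$, and the solution set is nonempty.

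For compactness I would note that closedness of the solution set is immediate from the continuity of $F$, and boundedness follows from a second application of the same device: an unbounded sequence of solutions of $\mathrm{NCP}(F)$, normalised and passed to the limit, would produce a nonzero solution of (\ref{e3}) with $t=0$, again contradicting $d$-regularity. The main obstacle is the a priori bound in the nonexistence argument: one must simultaneously control the normalised iterate $\mathbf{u}^{r}$ and the scaled multiplier $\tau_{r}$, and it is precisely the hypotheses $\mathbf{d}>\mathbf{0}$, which keeps $(\mathbf{u}^{r})^{\top}\mathbf{d}$ bounded below, and positive homogeneity of $G$, which makes the rescaled relations converge to the homogeneous system (\ref{e3}), that make the limit pass through cleanly.
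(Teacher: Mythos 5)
The paper offers no proof of this lemma at all --- it is imported verbatim from the cited reference \cite{cp}, so there is no internal argument to compare against. Your proof is correct and is essentially the standard argument from that literature (Mor\'e's truncation scheme): solve $\mathrm{VI}(\mathfrak{K}_{r},F)$ on the slices $\{\mathbf{x}\geq\mathbf{0}:\mathbf{d}^{\top}\mathbf{x}\leq r\}$, note that an inactive constraint yields a solution of $\mathrm{NCP}(F)$, and otherwise rescale by $\|\mathbf{x}^{r}\|^{\alpha}$ --- which is exactly where positive homogeneity is needed --- to extract a nonzero solution of system (\ref{e3}) contradicting d-regularity, with the same normalization argument (at $t=0$) giving boundedness, hence compactness, of the solution set.
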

The main characterization theorem for copositive tensors can be summarized as follows.
\begin{lemma}{\bf (\cite[Theorem 5]{Copositive})}\label{l4}
Let $\mathcal{A}\in T_{m,n}$ be a symmetric tensor. Then, $\mathcal{A}$ is copositive of and only if
\begin{equation*}
\min\{\mathcal{A}\mathbf{x}^{m}:\mathbf{x}\in\mathds{R}_{+}^{n},\ \sum_{i=1}^{n}x_{i}^{m}=1\}\geq 0.
\end{equation*}
$\mathcal{A}$ is strictly copositive if and only if
\begin{equation*}\min\{\mathcal{A}\mathbf{x}^{m}:\mathbf{x}\in\mathds{R}_{+}^{n},\ \sum\limits_{i=1}^{n}x_{i}^{m}=1\}> 0.
\end{equation*}
\end{lemma}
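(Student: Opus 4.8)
The plan is to exploit the fact that the objective $\mathbf{x}\mapsto\mathcal{A}\mathbf{x}^{m}$ is a homogeneous polynomial of degree $m$, so that the behaviour of $\mathcal{A}$ on the whole nonnegative orthant is completely determined by its behaviour on the compact normalization set
\[
S=\Big\{\mathbf{x}\in\mathds{R}_{+}^{n}:\ \sum_{i=1}^{n}x_{i}^{m}=1\Big\}.
\]
First I would record the homogeneity identity $\mathcal{A}(\lambda\mathbf{x})^{m}=\lambda^{m}\,\mathcal{A}\mathbf{x}^{m}$ for every $\lambda\geq 0$, which is immediate from the multilinear expansion $\mathcal{A}\mathbf{x}^{m}=\sum_{i_{1},\dots,i_{m}}\mathcal{A}_{i_{1}\cdots i_{m}}x_{i_{1}}\cdots x_{i_{m}}$. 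Next I would note that $S$ is nonempty, closed and bounded, hence compact (on $S$ each coordinate satisfies $0\leq x_{i}\leq 1$), and that $\mathbf{x}\mapsto\mathcal{A}\mathbf{x}^{m}$ is continuous; so by the Weierstrass extreme value theorem the minimum in the statement is actually attained at some $\mathbf{x}^{*}\in S$.

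For the copositive equivalence I would argue by scaling. If $\mathcal{A}$ is copositive then $\mathcal{A}\mathbf{x}^{m}\geq 0$ for all $\mathbf{x}\in\mathds{R}_{+}^{n}$, so in particular the minimum over $S$ is $\geq 0$. Conversely, assume the minimum over $S$ is $\geq 0$ and take an arbitrary $\mathbf{x}\in\mathds{R}_{+}^{n}$. If $\mathbf{x}=\mathbf{0}$ then $\mathcal{A}\mathbf{x}^{m}=0\geq 0$; otherwise set $s=(\sum_{i}x_{i}^{m})^{1/m}>0$ and $\mathbf{y}=\mathbf{x}/s\in S$, so that $\mathcal{A}\mathbf{x}^{m}=s^{m}\,\mathcal{A}\mathbf{y}^{m}\geq 0$ by homogeneity. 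This establishes copositivity.

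The strictly copositive equivalence follows the same template, and here attainment of the minimum is the point on which the argument really turns. If $\mathcal{A}$ is strictly copositive, then $\mathcal{A}\mathbf{x}^{m}>0$ for every nonzero $\mathbf{x}\in\mathds{R}_{+}^{n}$; in particular this holds at the minimizer $\mathbf{x}^{*}\in S$, which is nonzero because $\sum_{i}(x_{i}^{*})^{m}=1$, so the minimum equals $\mathcal{A}(\mathbf{x}^{*})^{m}>0$. Conversely, if the minimum over $S$ equals some value $\mu>0$, then for any nonzero $\mathbf{x}\in\mathds{R}_{+}^{n}$ the normalization $\mathbf{y}=\mathbf{x}/s$ satisfies $\mathcal{A}\mathbf{y}^{m}\geq\mu$, whence $\mathcal{A}\mathbf{x}^{m}=s^{m}\,\mathcal{A}\mathbf{y}^{m}\geq s^{m}\mu>0$, giving strict copositivity.

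The routine part is the scaling bookkeeping; the step that genuinely needs care is the compactness argument guaranteeing that the infimum is attained, since without it the direction \emph{strictly copositive} $\Rightarrow$ \emph{minimum} $>0$ could fail (an infimum of $0$ might be approached but never reached, even though every individual value on $S$ is positive). The remaining ingredients, namely continuity of the tensor form and closedness and boundedness of $S$, are standard, so I expect the whole argument to be short once attainment is secured.
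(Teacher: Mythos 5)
Your proof is correct, and it is essentially the canonical argument: the paper itself states this lemma without proof, importing it directly from Qi's paper on copositive tensors (\cite[Theorem 5]{Copositive}), and the proof given there is exactly your homogeneity-plus-compactness reduction to the normalized set $S$, with Weierstrass attainment doing the real work in the direction \emph{strictly copositive} $\Rightarrow$ \emph{minimum} $>0$. You correctly identify attainment as the critical step; one further small observation is that your argument never uses the symmetry hypothesis, so the equivalence holds for arbitrary $\mathcal{A}\in T_{m,n}$.
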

\subsection{Problem Description}
In this subsection, we propose two questions which we shall discuss in this note.
\begin{question}{\bf (\cite{SQ})}\label{q1}
Given $\mathcal{A}\in T_{m,n}$ and $\mathbf{q}\in \mathds{R}^{n}$. The $\mathrm{NCP}(\mathbf{q},\mathcal{A})$ is to find a vector $\mathbf{x}\in \mathds{R}_{+}^{n}$ such that
\begin{equation*}
\mathcal{A}\mathbf{x}^{m-1}+\mathbf{q}\in \mathds{R}_{+}^{n},\quad \mathcal{A}\mathbf{x}^{m}+\mathbf{q}^{\top}\mathbf{x}=0.
\end{equation*}
\end{question}
\begin{question}\label{q2}
Given $\mathcal{A}_{k}\in T_{m-(2k-2),n}$ and $\mathbf{q}\in \mathds{R}^{n}$ with $k=1,2,\dots,m/2$. The $\mathrm{GNCP}(\mathbf{q},\{\mathcal{A}_{k}\})$ is to find a vector $\mathbf{x}\in \mathds{R}_{+}^{n}$ such that
\begin{equation*}
\sum_{k=1}^{m/2}\mathcal{A}_{k}\mathbf{x}^{m-(2k-1)}+\mathbf{q}\in \mathds{R}_{+}^{n},\quad \sum_{k=1}^{m/2}\mathcal{A}_{k}\mathbf{x}^{m-2k+2}+\mathbf{q}^{\top}\mathbf{x}=0,
\end{equation*}
where $\mathcal{A}_{m/2}$ is a square matrix.
\end{question}
Let $\mathrm{FEA}(\mathbf{q},\mathcal{A})=\{\mathbf{x}\in \mathds{R}_{+}^{n}:\mathcal{A}\mathbf{x}^{m-1}+\mathbf{q}\in \mathds{R}_{+}^{n}\}$. If $\mathrm{FEA}(\mathbf{q},\mathcal{A})\neq\emptyset$, then we see that $\mathrm{NCP}(\mathbf{q},\mathcal{A})$ is feasible.
It is obvious that Question \ref{q1} is a special case of Question \ref{q2}. However, for simplicity, we only consider the solvability of Question \ref{q1}, and make use of the results obtained by solving Question \ref{q1}, we then consider the solvability of Question \ref{q2}.
\section{Main results}
Without loss of generality, suppose that $\mathbf{q}\in\mathds{R}^{n}$ in Questions \ref{q1} and \ref{q2} is nonzero. For example, let $\mathcal{A}\in ST_{m,n}$ be positive definite. If $\mathbf{q}$ is zero, then the solution of Question \ref{q1} is zero. This situation is extraordinary,  in order to avoid this situation, let $\mathbf{q}\in\mathds{R}^{n}$ in Questions \ref{q1} and \ref{q2} be nonzero.
\subsection{Necessary conditions for Solving Question \ref{q1}}
The cornerstone for the {\it necessary} conditions to be presented is the nonlinear programming formulation of the Question \ref{q1},
\begin{equation}\label{e5}
\begin{split}
\min&\qquad\mathcal{A}\mathbf{x}^{m}+\mathbf{q}^{\top}\mathbf{x}\\
s.t.&\qquad\mathcal{A}\mathbf{x}^{m-1}+\mathbf{q}\in \mathds{R}_{+}^{n},\ \mathbf{x}\in \mathds{R}_{+}^{n}.
\end{split}
\end{equation}
Because $\mathrm{FEA}(\mathbf{q},\mathcal{A})$ is also the feasible set of (\ref{e5}), if $\mathbf{x}_{*}$ minimizes the nonlinear programming given in (\ref{e5}) and $\mathcal{A}\mathbf{x}_{*}^{m}+\mathbf{q}^{\top}\mathbf{x}_{*}=0$, then $\mathbf{x}_{*}$ is a solution of Question \ref{q1}. According to first-order necessary conditions given in \cite{numerical}, we obtain the following result.
\begin{theorem}\label{t1}
If $\mathrm{FEA}(\mathbf{q},\mathcal{A})\neq\emptyset$ and $\mathbf{x}_{*}$ is a local solution of (\ref{e5}). Then, there exists a vector $\mathbf{u}_{*}$ of multipliers satisfying the conditions,
\begin{equation}\label{e6}
\begin{split}
\mathbf{q}+m\mathcal{A}\mathbf{x}_{*}^{m-1}-(m-1)\mathcal{A}\mathbf{x}_{*}^{m-2}\mathbf{u}_{*}&\geq\mathbf{0}\\
\mathbf{x}_{*}^{\top}(\mathbf{q}+m\mathcal{A}\mathbf{x}_{*}^{m-1}-(m-1)\mathcal{A}\mathbf{x}_{*}^{m-2}\mathbf{u}_{*})&=0\\
\mathbf{u}_{*}&\geq\mathbf{0}\\
\mathbf{u}_{*}^{\top}(\mathbf{q}+\mathcal{A}\mathbf{x}_{*}^{m-1})&=0.
\end{split}
\end{equation}
Finally, the vectors $\mathbf{x}_{*}$ and $\mathbf{u}_{*}$ satisfy
\begin{equation}\label{e7}
(m-1)(\mathbf{x}_{*}-\mathbf{u}_{*})_{i}(\mathcal{A}\mathbf{x}_{*}^{m-2}(\mathbf{x}_{*}-\mathbf{u}_{*}))_{i}\leq0, \quad i\in[n].
\end{equation}
\end{theorem}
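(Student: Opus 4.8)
The plan is to recognize the four relations in (\ref{e6}) as the Karush--Kuhn--Tucker conditions of the nonlinear program (\ref{e5}), and then to extract (\ref{e7}) by a short algebraic manipulation of those conditions together with feasibility and complementary slackness.

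First I would record the derivatives needed to instantiate the first-order necessary conditions of \cite{numerical}. Writing the objective as $f(\mathbf{x})=\mathcal{A}\mathbf{x}^{m}+\mathbf{q}^{\top}\mathbf{x}$ and using the symmetry of $\mathcal{A}$, one has $\nabla f(\mathbf{x})=m\mathcal{A}\mathbf{x}^{m-1}+\mathbf{q}$, while the constraint map $g(\mathbf{x})=\mathcal{A}\mathbf{x}^{m-1}+\mathbf{q}$ has Jacobian $(m-1)\mathcal{A}\mathbf{x}^{m-2}$, which is a symmetric matrix for each fixed $\mathbf{x}$. Attaching a multiplier $\mathbf{u}_{*}$ to the constraint $g(\mathbf{x})\geq\mathbf{0}$ and an auxiliary multiplier $\mathbf{v}_{*}$ to the sign constraint $\mathbf{x}\geq\mathbf{0}$, the stationarity condition reads $\nabla f(\mathbf{x}_{*})-(m-1)\mathcal{A}\mathbf{x}_{*}^{m-2}\mathbf{u}_{*}-\mathbf{v}_{*}=\mathbf{0}$, the transpose of the Jacobian having disappeared because $\mathcal{A}\mathbf{x}_{*}^{m-2}$ is symmetric. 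Hence $\mathbf{v}_{*}=\mathbf{q}+m\mathcal{A}\mathbf{x}_{*}^{m-1}-(m-1)\mathcal{A}\mathbf{x}_{*}^{m-2}\mathbf{u}_{*}$, and the sign conditions $\mathbf{v}_{*}\geq\mathbf{0}$, $\mathbf{u}_{*}\geq\mathbf{0}$ together with the two complementary slackness relations $\mathbf{x}_{*}^{\top}\mathbf{v}_{*}=0$ and $\mathbf{u}_{*}^{\top}g(\mathbf{x}_{*})=0$ are exactly the four lines of (\ref{e6}).

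To produce (\ref{e7}) I would set $\mathbf{w}=\mathbf{x}_{*}-\mathbf{u}_{*}$ and compute $(m-1)\mathcal{A}\mathbf{x}_{*}^{m-2}\mathbf{w}$. Using $\mathcal{A}\mathbf{x}_{*}^{m-2}\mathbf{x}_{*}=\mathcal{A}\mathbf{x}_{*}^{m-1}$ and eliminating the term $(m-1)\mathcal{A}\mathbf{x}_{*}^{m-2}\mathbf{u}_{*}$ through the stationarity identity above, one obtains the clean relation $(m-1)\mathcal{A}\mathbf{x}_{*}^{m-2}\mathbf{w}=\mathbf{v}_{*}-g(\mathbf{x}_{*})$. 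Writing $g_{i}$ for the $i$-th component of $\mathcal{A}\mathbf{x}_{*}^{m-1}+\mathbf{q}$, multiplying the $i$-th component by $w_{i}=(\mathbf{x}_{*}-\mathbf{u}_{*})_{i}$ and expanding gives
\begin{equation*}
(m-1)(\mathbf{x}_{*}-\mathbf{u}_{*})_{i}\bigl(\mathcal{A}\mathbf{x}_{*}^{m-2}(\mathbf{x}_{*}-\mathbf{u}_{*})\bigr)_{i}=x_{*,i}v_{*,i}-x_{*,i}g_{i}-u_{*,i}v_{*,i}+u_{*,i}g_{i}.
\end{equation*}
The complementary slackness relations kill the products $x_{*,i}v_{*,i}$ and $u_{*,i}g_{i}$ componentwise (in each the two factors are nonnegative with vanishing sum), leaving $-x_{*,i}g_{i}-u_{*,i}v_{*,i}$, which is $\leq 0$ since all four quantities are nonnegative by feasibility and the sign conditions. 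This is precisely (\ref{e7}).

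The only genuinely delicate point is the invocation of the first-order necessary conditions: the existence of the multipliers $(\mathbf{u}_{*},\mathbf{v}_{*})$ presupposes a constraint qualification at $\mathbf{x}_{*}$, which is what the cited result supplies and is the hypothesis a careful reader should verify against the (nonconvex) feasible set $\mathrm{FEA}(\mathbf{q},\mathcal{A})$. Once (\ref{e6}) is in hand, the derivation of (\ref{e7}) is entirely elementary; the main piece of bookkeeping is simply that the symmetry of $\mathcal{A}$ makes the constraint Jacobian symmetric, so that no transpose survives in the stationarity equation and the factor $m-1$ comes out cleanly when differentiating $\mathcal{A}\mathbf{x}^{m-1}$.
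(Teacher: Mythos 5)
Your proposal is correct and follows essentially the same route as the paper: both obtain (\ref{e6}) by invoking the Karush--Kuhn--Tucker conditions for the program (\ref{e5}), and both derive (\ref{e7}) from componentwise complementary slackness together with feasibility and the sign conditions. Your single identity $(m-1)\mathcal{A}\mathbf{x}_{*}^{m-2}(\mathbf{x}_{*}-\mathbf{u}_{*})=\mathbf{v}_{*}-g(\mathbf{x}_{*})$, expanded against $(\mathbf{x}_{*}-\mathbf{u}_{*})_{i}$, is exactly the paper's addition of its two intermediate inequalities (\ref{e8}) and (\ref{e9}) carried out in one step, and your remark about the constraint qualification makes explicit a point the paper passes over silently.
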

\begin{proof}
Since $\mathrm{FEA}(\mathbf{q},\mathcal{A})\neq\emptyset$,  the nonlinear program (\ref{e5}) is feasible. Such an optimal solution $\mathbf{x}_{*}$ and a suitable vector $\mathbf{u}_{*}$ of multipliers will satisfy the Karush-Kuhn-Tucker conditions (\ref{e6}). To prove (\ref{e7}), we examine the inner product $$\mathbf{x}_{*}^{\top}(\mathbf{q}+m\mathcal{A}\mathbf{x}_{*}^{m-1}-(m-1)\mathcal{A}\mathbf{x}_{*}^{m-2}\mathbf{u}_{*})=0,$$
at the componentwise level and deduce that for all $i\in[n]$,
\begin{equation}\label{e8}
(m-1)(\mathbf{x}_{*})_{i}(\mathcal{A}\mathbf{x}_{*}^{m-2}(\mathbf{x}_{*}-\mathbf{u}_{*}))_{i}\leq 0,
\end{equation}
using the fact that $\mathbf{x}_{*}\in \mathrm{FEA}(\mathbf{q},\mathcal{A})$. Similarly, multiplying the $i$th component in $$\mathbf{q}+m\mathcal{A}\mathbf{x}_{*}^{m-1}-(m-1)\mathcal{A}\mathbf{x}_{*}^{m-2}\mathbf{u}_{*}\geq\mathbf{0},$$ by $\mathbf{u}_{*}$ and then invoking the complementarity condition $$(\mathbf{u}_{*})_{i}(\mathbf{q}+\mathcal{A}\mathbf{x}_{*}^{m-1})_{i}=0,$$ which is implied by $\mathbf{u}_{*}\geq\mathbf{0}$, $\mathbf{u}_{*}^{\top}(\mathbf{q}+\mathcal{A}\mathbf{x}_{*}^{m-1})=0$, and the feasibility of of $\mathbf{x}_{*}$, we obtain
\begin{equation}\label{e9}
-(m-1)(\mathbf{u}_{*})_{i}(\mathcal{A}\mathbf{x}_{*}^{m-2}(\mathbf{x}_{*}-\mathbf{u}_{*}))_{i}\leq0.
\end{equation}
Now, (\ref{e7}) follows by adding (\ref{e8}) and (\ref{e9}).
\end{proof}
\begin{remark}
Cottle (\cite{Jacobians_ncp}) obtained the general matrix case of Theorem \ref{t1}.
\end{remark}
With the help of Theorem \ref{t1}, we prove the following existence result for the $\mathrm{NCP(\mathbf{q},\mathcal{A})}$.
\begin{theorem}\label{t2}
Let nonzero $\mathbf{x}_{*}$ be a local solution of $($\ref{e5}$)$. If $\mathcal{A}\mathbf{x}^{m-2}$ is positive definite for all nonzero $\mathbf{x}\in\mathds{R}^{n}$, then $\mathbf{x}_{*}$ solves $\mathrm{NCP}(\mathbf{q},\mathcal{A})$.
\end{theorem}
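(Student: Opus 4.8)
The plan is to show that the optimal objective value of (\ref{e5}) is forced to be zero at $\mathbf{x}_{*}$, exploiting the fact that the objective $\mathcal{A}\mathbf{x}^{m}+\mathbf{q}^{\top}\mathbf{x}$ is precisely the complementarity residual of $\mathrm{NCP}(\mathbf{q},\mathcal{A})$. First I would record that, because $\mathbf{x}_{*}$ is a local solution of (\ref{e5}), it is in particular feasible; hence $\mathbf{x}_{*}\geq\mathbf{0}$ and $\mathcal{A}\mathbf{x}_{*}^{m-1}+\mathbf{q}\geq\mathbf{0}$, which are exactly the first two requirements of $\mathrm{NCP}(\mathbf{q},\mathcal{A})$. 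Consequently $\mathcal{A}\mathbf{x}_{*}^{m}+\mathbf{q}^{\top}\mathbf{x}_{*}=\mathbf{x}_{*}^{\top}(\mathcal{A}\mathbf{x}_{*}^{m-1}+\mathbf{q})\geq 0$, so it remains only to prove the reverse inequality, namely the complementarity equation $\mathcal{A}\mathbf{x}_{*}^{m}+\mathbf{q}^{\top}\mathbf{x}_{*}=0$.

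The key step is to invoke Theorem \ref{t1}. Since $\mathrm{FEA}(\mathbf{q},\mathcal{A})\neq\emptyset$ and $\mathbf{x}_{*}$ is a local solution, Theorem \ref{t1} furnishes a multiplier vector $\mathbf{u}_{*}$ satisfying the Karush--Kuhn--Tucker relations (\ref{e6}) together with the componentwise inequalities (\ref{e7}). Summing (\ref{e7}) over $i\in[n]$ collapses the $n$ scalar products into a single quadratic form, yielding
\begin{equation*}
(m-1)(\mathbf{x}_{*}-\mathbf{u}_{*})^{\top}\big(\mathcal{A}\mathbf{x}_{*}^{m-2}\big)(\mathbf{x}_{*}-\mathbf{u}_{*})\leq 0.
\end{equation*}
Because $m$ is even we have $m-1>0$, and because $\mathbf{x}_{*}\neq\mathbf{0}$ the hypothesis guarantees that the matrix $\mathcal{A}\mathbf{x}_{*}^{m-2}$ is positive definite. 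A positive definite quadratic form is nonnegative and vanishes only at the zero vector; combined with the displayed inequality this forces $\mathbf{x}_{*}-\mathbf{u}_{*}=\mathbf{0}$, that is $\mathbf{u}_{*}=\mathbf{x}_{*}$.

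Finally I would substitute $\mathbf{u}_{*}=\mathbf{x}_{*}$ into the last KKT equation $\mathbf{u}_{*}^{\top}(\mathbf{q}+\mathcal{A}\mathbf{x}_{*}^{m-1})=0$ of (\ref{e6}), obtaining $\mathbf{x}_{*}^{\top}(\mathbf{q}+\mathcal{A}\mathbf{x}_{*}^{m-1})=\mathbf{q}^{\top}\mathbf{x}_{*}+\mathcal{A}\mathbf{x}_{*}^{m}=0$. Together with the feasibility noted at the outset, this is exactly the assertion that $\mathbf{x}_{*}$ solves $\mathrm{NCP}(\mathbf{q},\mathcal{A})$.

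I expect the only delicate point to be the aggregation step and its justification. Passing from the $n$ scalar inequalities (\ref{e7}) to the single quadratic-form inequality requires recognizing $\sum_{i}(\mathbf{x}_{*}-\mathbf{u}_{*})_{i}(\mathcal{A}\mathbf{x}_{*}^{m-2}(\mathbf{x}_{*}-\mathbf{u}_{*}))_{i}$ as $(\mathbf{x}_{*}-\mathbf{u}_{*})^{\top}(\mathcal{A}\mathbf{x}_{*}^{m-2})(\mathbf{x}_{*}-\mathbf{u}_{*})$, and then one must verify that the positive-definiteness hypothesis genuinely applies at the specific point $\mathbf{x}_{*}$, which is precisely where the assumption $\mathbf{x}_{*}\neq\mathbf{0}$ enters. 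If $\mathcal{A}$ is not taken to be symmetric, the quadratic-form reading automatically refers only to the symmetric part of $\mathcal{A}\mathbf{x}_{*}^{m-2}$, so no extra symmetry hypothesis is required for the argument to go through.
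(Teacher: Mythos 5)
Your proposal is correct and follows essentially the same route as the paper: invoke Theorem \ref{t1} to obtain the multiplier $\mathbf{u}_{*}$, sum the componentwise inequalities (\ref{e7}) into the quadratic form $(\mathbf{x}_{*}-\mathbf{u}_{*})^{\top}\mathcal{A}\mathbf{x}_{*}^{m-2}(\mathbf{x}_{*}-\mathbf{u}_{*})\leq 0$, conclude $\mathbf{x}_{*}=\mathbf{u}_{*}$ from positive definiteness, and read off complementarity from (\ref{e6}). Your version is in fact slightly more careful than the paper's, since you make explicit the substitution $\mathbf{u}_{*}=\mathbf{x}_{*}$ into $\mathbf{u}_{*}^{\top}(\mathbf{q}+\mathcal{A}\mathbf{x}_{*}^{m-1})=0$ where the paper only gestures at (\ref{e6}).
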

\begin{proof}
According to Theorem \ref{t1}, there exists a nonnegative vector $\mathbf{u}_{*}$ such that
\begin{equation*}
(m-1)(\mathbf{x}_{*}-\mathbf{u}_{*})_{i}(\mathcal{A}\mathbf{x}_{*}^{m-2}(\mathbf{x}_{*}-\mathbf{u}_{*}))_{i}\leq0, \quad i\in[n],
\end{equation*}
that is,
\begin{equation*}
(\mathbf{x}_{*}-\mathbf{u}_{*})^{\top}(\mathcal{A}\mathbf{x}_{*}^{m-2}(\mathbf{x}_{*}-\mathbf{u}_{*}))\leq0.
\end{equation*}
According to proposition assumptions, we know that $\mathbf{x}_{*}=\mathbf{u}_{*}$. Based on (\ref{e6}), then, $\mathbf{x}_{*}$ solves $\mathrm{NCP}(\mathbf{q},\mathcal{A}))$.
\end{proof}
Moreover, we can derive some results about Question \ref{q2}, similar to Theorems \ref{t1} and \ref{t2}. Here, we do not list them out.
\subsection{Solving Question \ref{q1}}
In Question \ref{q1}, let $F(\mathbf{x})=\mathcal{A}\mathbf{x}^{m-1}+\mathbf{q}$. We first consider some properties of $F(\mathbf{x})$ when $\mathcal{A}$ is selected from sets of structured tensors.
\begin{theorem}\label{t3}
Suppose $\mathcal{A}\in ST_{m,n}$ and $\mathbf{x}\in\mathds{R}_{+}^{n}$.
\begin{enumerate}
\item[\rm (a)] If $\mathcal{A}$ is (strictly) copositive, then the mapping $F(\mathbf{x})$ is (strictly) copositive with respect to $\mathds{R}_{+}^{n}$,
\item[\rm (b)] If $\mathcal{A}$ is positive definite, then the mapping $F(\mathbf{x})$ is strongly copositive with respect to $\mathds{R}_{+}^{n}$ when $\alpha\leq\lambda_{\min}\ (\leq\lambda_{\min}\frac{\|\mathbf{x}\|_{2}^{2}}{\|\mathbf{x}\|_{m}^{m}})$, where $\lambda_{\min}$ is the smallest Z-eigenvalue (H-eigenvalue) of $\mathcal{A}$.
\end{enumerate}
\end{theorem}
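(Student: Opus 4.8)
The plan is to reduce both parts to a single scalar identity and then read off each conclusion from the corresponding property of the tensor $\mathcal{A}$. First I would compute $F(\mathbf{0})=\mathcal{A}\mathbf{0}^{m-1}+\mathbf{q}=\mathbf{q}$, so that the mapping $G$ of (\ref{e2}) is simply $G(\mathbf{x})=F(\mathbf{x})-F(\mathbf{0})=\mathcal{A}\mathbf{x}^{m-1}$; in particular $G$ is positively homogeneous of degree $m-1$, which will also be useful when invoking the existence lemmas later. Contracting the remaining mode against $\mathbf{x}$ gives
\begin{equation*}
[F(\mathbf{x})-F(\mathbf{0})]^{\top}\mathbf{x}=(\mathcal{A}\mathbf{x}^{m-1})^{\top}\mathbf{x}=\mathcal{A}\mathbf{x}^{m},
\end{equation*}
which is exactly the quantity appearing in every clause of Definition \ref{d2}.

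For part (a), restrict to $\mathbf{x}\in\mathds{R}_{+}^{n}$. Then the right-hand side $\mathcal{A}\mathbf{x}^{m}$ is nonnegative precisely when $\mathcal{A}$ is copositive, and strictly positive for $\mathbf{x}\neq\mathbf{0}$ precisely when $\mathcal{A}$ is strictly copositive, by the definitions following Definition \ref{d3}. Hence $[F(\mathbf{x})-F(\mathbf{0})]^{\top}\mathbf{x}\geq0$ (resp. $>0$) on $\mathds{R}_{+}^{n}$, which is exactly (strict) copositivity of $F$ in the sense of Definition \ref{d2}(a)--(b). If a compactness formulation is preferred, the same conclusion follows from Lemma \ref{l4} after normalizing to $\sum_{i}x_{i}^{m}=1$.

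For part (b), I would use the variational characterization of the extreme eigenvalues of a symmetric tensor. Since $m$ is even and $\mathcal{A}$ is positive definite, every Z-eigenvalue (H-eigenvalue) is positive, so $\lambda_{\min}>0$; moreover $\lambda_{\min}$ realizes the minimum of $\mathcal{A}\mathbf{x}^{m}$ over the corresponding unit sphere, i.e. $\min_{\|\mathbf{x}\|_{2}=1}\mathcal{A}\mathbf{x}^{m}=\lambda_{\min}$ in the Z-case and $\min_{\|\mathbf{x}\|_{m}=1}\mathcal{A}\mathbf{x}^{m}=\lambda_{\min}$ in the H-case. By the degree-$m$ homogeneity of $\mathcal{A}\mathbf{x}^{m}$ this rescales to
\begin{equation*}
\mathcal{A}\mathbf{x}^{m}\geq\lambda_{\min}\|\mathbf{x}\|_{2}^{m}\quad(\text{Z-case}),\qquad \mathcal{A}\mathbf{x}^{m}\geq\lambda_{\min}\|\mathbf{x}\|_{m}^{m}\quad(\text{H-case}),
\end{equation*}
for every $\mathbf{x}\in\mathds{R}^{n}$. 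To match the quadratic lower bound $\alpha\|\mathbf{x}\|_{2}^{2}$ demanded by Definition \ref{d2}(c), I would factor $\|\mathbf{x}\|_{2}^{2}$ out of the right-hand side, so that admissibility of $\alpha$ amounts to $\alpha\le\lambda_{\min}\,\|\mathbf{x}\|_{m}^{m}/\|\mathbf{x}\|_{2}^{2}$, the $\mathbf{x}$-dependent constant recorded in the parenthetical of the statement.

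The main obstacle is the degree mismatch: $\mathcal{A}\mathbf{x}^{m}$ is homogeneous of degree $m>2$, whereas the target $\alpha\|\mathbf{x}\|_{2}^{2}$ is homogeneous of degree $2$. Consequently no single positive constant $\alpha$ can force $\mathcal{A}\mathbf{x}^{m}\geq\alpha\|\mathbf{x}\|_{2}^{2}$ simultaneously for large and for vanishingly small $\mathbf{x}$; the inequality is genuine only after the $\mathbf{x}$-dependent normalization, which is exactly why $\alpha$ must be allowed to depend on $\mathbf{x}$ through the ratio of $\|\mathbf{x}\|_{2}$ and $\|\mathbf{x}\|_{m}$. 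I would therefore either restrict attention to the unit sphere (or any set bounded away from the origin), where a uniform $\alpha$ does exist, or state the conclusion with the pointwise constant; the one delicate bookkeeping step is the passage between the Z- and H-eigenvalue characterizations and the two norms $\|\cdot\|_{2}$ and $\|\cdot\|_{m}$, which is precisely where that ratio enters.
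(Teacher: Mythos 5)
Your proof of part (a) coincides with the paper's: both reduce to the identity $[F(\mathbf{x})-F(\mathbf{0})]^{\top}\mathbf{x}=\mathcal{A}\mathbf{x}^{m}$ and read off (strict) copositivity of $F$ from that of $\mathcal{A}$. For part (b) the paper's entire argument is the observation that positive definiteness forces $\lambda_{\min}>0$ by \cite[Theorem 5]{Qiliqun}, followed immediately by ``hence, part (b) is proved''; you supply the scaling step the paper omits, namely $\mathcal{A}\mathbf{x}^{m}\ge\lambda_{\min}\|\mathbf{x}\|_{2}^{m}$ in the Z-case and $\mathcal{A}\mathbf{x}^{m}\ge\lambda_{\min}\|\mathbf{x}\|_{m}^{m}$ in the H-case, and you correctly isolate the degree mismatch: with Definition \ref{d2}(c) taken literally, no uniform $\alpha>0$ can satisfy $\mathcal{A}\mathbf{x}^{m}\ge\alpha\|\mathbf{x}\|_{2}^{2}$ as $\mathbf{x}\to\mathbf{0}$ when $m>2$, so $\alpha$ must either depend on $\mathbf{x}$ or the claim must be restricted to a set bounded away from the origin. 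This is a genuine defect of the theorem as stated, which the paper's one-line proof passes over in silence; on this point your writeup is the more rigorous of the two.

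One bookkeeping correction to your final step: the admissibility condition you derive in the H-case, $\alpha\le\lambda_{\min}\|\mathbf{x}\|_{m}^{m}/\|\mathbf{x}\|_{2}^{2}$, is the mathematically correct one, but it is \emph{not} ``the constant recorded in the parenthetical of the statement'' --- the theorem prints the reciprocal ratio $\lambda_{\min}\|\mathbf{x}\|_{2}^{2}/\|\mathbf{x}\|_{m}^{m}$. Your version is right and the printed one appears to be an error: for small $\mathbf{x}$ one has $\|\mathbf{x}\|_{m}^{m}\sim\varepsilon^{m}\ll\varepsilon^{2}\sim\|\mathbf{x}\|_{2}^{2}$, so the printed bound would permit $\alpha\to\infty$ precisely where strong copositivity degenerates. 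Similarly, in the Z-case the correct pointwise condition is $\alpha\le\lambda_{\min}\|\mathbf{x}\|_{2}^{m-2}$, so the unqualified ``$\alpha\le\lambda_{\min}$'' of the statement is meaningful only on the unit sphere $\|\mathbf{x}\|_{2}=1$ --- consistent with your proposed repair of restricting to sets bounded away from the origin.
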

\begin{proof} According to Definition \ref{d2}, because of $F(\mathbf{x})=\mathcal{A}\mathbf{x}^{m-1}+\mathbf{q}$, so $[F(\mathbf{x})-F(\mathbf{0})]^{\top}\mathbf{x}=\mathcal{A}\mathbf{x}^{m}$. Since $\mathcal{A}$ is (strictly) copositive,  $\mathcal{A}\mathbf{x}^{m}\ (>)\ \geq 0$. Then, part $(a)$ is proved. We now prove part $(b)$. Since $\mathcal{A}$ is positive definite,  according to \cite[Theorem 5]{Qiliqun}, we know that the smallest Z-eigenvalue (H-eigenvalue) of $\mathcal{A}$ is greater than zero. Hence, part $(b)$ is proved.
\end{proof}
When $\mathcal{A}\in D_{m,n}$ is positive definite, the following theorem will give a porperty of the Jacobian matrix $\nabla F(\mathbf{x})$, where $\mathbf{x}$ is nonzero vector.
\begin{theorem}\label{t4}
Let $\mathcal{A}\in SD_{m,n}$ be positive definite. Then the Jacobian matrix $\nabla F(\mathbf{x})$ is positive definite with $\mathbf{x}\neq \mathbf{0}$.
\end{theorem}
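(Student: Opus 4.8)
The plan is to reduce the statement to an ordinary matrix positive‑definiteness question by first computing the Jacobian explicitly. Since $F(\mathbf{x})=\mathcal{A}\mathbf{x}^{m-1}+\mathbf{q}$ with $\mathcal{A}$ symmetric, differentiating componentwise and using the permutation invariance from Definition \ref{d3} collapses the $m-1$ resulting contractions into one, giving $\nabla F(\mathbf{x})=(m-1)\,\mathcal{A}\mathbf{x}^{m-2}$, a symmetric $n\times n$ matrix. As $m-1>0$, proving the theorem is equivalent to showing that the matrix $M(\mathbf{x}):=\mathcal{A}\mathbf{x}^{m-2}$ is positive definite for every $\mathbf{x}\neq\mathbf{0}$.

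Next I would exploit the diagonalizable structure of Definition \ref{d5}. Writing $\mathcal{A}=\mathcal{D}\times_1 B\cdots\times_m B$ with $B$ invertible and $\mathcal{D}$ diagonal (set $\mathcal{D}_{i\cdots i}=d_i$), Definition \ref{d1} yields $\mathcal{A}_{j_1\cdots j_m}=\sum_{i}d_i\,b_{j_1 i}\cdots b_{j_m i}$. Contracting $m-2$ copies of $\mathbf{x}$ then factors cleanly: with $\mathbf{w}:=B^\top\mathbf{x}$ one obtains the congruence
$$M(\mathbf{x})=B\,\mathrm{diag}\!\big(d_1 w_1^{m-2},\dots,d_n w_n^{m-2}\big)\,B^\top .$$
The same decomposition applied to $\mathcal{A}\mathbf{z}^m=\sum_i d_i (B^\top\mathbf{z})_i^m$ together with the invertibility of $B$ shows that the positive definiteness of $\mathcal{A}$ is equivalent to $d_i>0$ for all $i$ (testing on $\mathbf{z}=B^{-\top}\mathbf{e}_i$ gives necessity; $m$ even gives sufficiency). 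Because $B$ is nonsingular, $M(\mathbf{x})$ inherits its inertia from the inner diagonal matrix, so it suffices to analyse the signs of the entries $d_i w_i^{m-2}$.

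Since $m$ is even, $w_i^{m-2}\ge 0$, and with each $d_i>0$ every diagonal entry is nonnegative; hence $M(\mathbf{x})$ is at least positive semidefinite by congruence. The main obstacle — and the genuinely delicate point — is upgrading this to \emph{strict} positive definiteness, which requires every entry $d_i w_i^{m-2}$ to be strictly positive, i.e. $w_i=(B^\top\mathbf{x})_i\neq 0$ for \emph{all} $i$. The hypothesis $\mathbf{x}\neq\mathbf{0}$ only forces $\mathbf{w}=B^\top\mathbf{x}\neq\mathbf{0}$, i.e. at least one nonzero coordinate; if some $w_i=0$ then the inner matrix is singular and $M(\mathbf{x})$ is merely semidefinite (for instance $B=I$, $\mathcal{D}=\mathcal{I}$, $\mathbf{x}=\mathbf{e}_1$ already fails). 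Thus the crux is to guarantee that $B^\top\mathbf{x}$ has no vanishing coordinate, which appears to require strengthening the hypothesis — restricting to $\mathbf{x}$ with $B^\top\mathbf{x}$ fully supported (equivalently, to the interior on which all coordinates of $B^\top\mathbf{x}$ are nonzero). I would therefore either impose such a condition to obtain strict definiteness, or, if only positive semidefiniteness is actually needed downstream (e.g. for the principal‑minor bounds feeding Lemma \ref{l1}), stop at the congruence step.
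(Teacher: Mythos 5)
Your computation retraces the paper's own argument step for step: the paper likewise uses Definition \ref{d5} to write $\mathcal{A}\mathbf{x}^{m}=\mathcal{D}(B^{\top}\mathbf{x})^{m}=\sum_{i=1}^{n}d_{i}y_{i}^{m}$ with $\mathbf{y}=B^{\top}\mathbf{x}$, deduces $d_{i}>0$ from positive definiteness of $\mathcal{A}$, and reduces $\nabla F(\mathbf{x})=(m-1)\mathcal{A}\mathbf{x}^{m-2}$ to the diagonal matrix with entries $d_{i}y_{i}^{m-2}$ by the same congruence. The difference is that the obstruction you isolate at the end is genuine, and the paper's proof simply steps over it: the paper asserts $\mathbf{z}^{\top}\nabla F(\mathbf{x})\mathbf{z}=(m-1)\sum_{i=1}^{n}d_{i}y_{i}^{m-2}z_{i}^{2}>0$ for all nonzero $\mathbf{z}$, but $\mathbf{x}\neq\mathbf{0}$ only forces $\mathbf{y}\neq\mathbf{0}$, not $y_{i}\neq 0$ for every $i$. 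If some $y_{i_{0}}=0$, the quadratic form vanishes on the nonzero vector $\mathbf{z}=B^{-\top}\mathbf{e}_{i_{0}}$, so only positive \emph{semi}definiteness follows. Your counterexample is correct: for $\mathcal{A}=\mathcal{I}$ (so $B=I$, $d_{i}=1$), even $m\geq 4$, $n\geq 2$, and $\mathbf{x}=\mathbf{e}_{1}$, one gets $\nabla F(\mathbf{e}_{1})=(m-1)\,\mathrm{diag}(1,0,\dots,0)$, which is singular. Thus the theorem is false as stated, and your refusal to ``complete'' the proof is the right call rather than a shortcoming. A minor additional point in your favor: your form $\sum_{i}d_{i}y_{i}^{m-2}(B^{\top}\mathbf{z})_{i}^{2}$ is the correct one; the paper writes $z_{i}^{2}$ where $(B^{\top}\mathbf{z})_{i}^{2}$ is meant, conflating $\mathbf{z}$ with $B^{\top}\mathbf{z}$, though that slip is cosmetic next to the definiteness gap.

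Your two suggested repairs are the natural ones: either restrict to those $\mathbf{x}$ for which $B^{\top}\mathbf{x}$ has full support, or retain only semidefiniteness from the congruence. Note also that the gap propagates downstream exactly where you suspect: Theorem \ref{t5}(a) invokes Lemma \ref{l1} through Theorem \ref{t4}, and even pointwise strict definiteness would not supply the uniform principal-minor bounds in $[\delta,\delta^{-1}]$ that Lemma \ref{l1} requires, since $\mathcal{A}\mathbf{x}^{m-2}\rightarrow 0$ as $\mathbf{x}\rightarrow\mathbf{0}$ whenever $m>2$; so stopping at the congruence step would not rescue that application either.
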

\begin{proof}
As $\mathcal{A}$ is diagonalizable, for a vector $\mathbf{x}$, according to Definition \ref{d1}, we have
\begin{equation*}
\begin{split}
\mathcal{A}\mathbf{x}^{m}&=(\mathcal{D}\times_{1}B\times_{2}B\dots\times_{m}B)\mathbf{x}^{m}=\mathcal{D}(B^{\top}\mathbf{x})^{m}\\
&=\mathcal{D}\mathbf{y}^{m}\ (\mathbf{y}\overset{\triangle}{=}B^{\top}\mathbf{x})=\sum_{i=1}^{n}d_{i}y_{i}^{m},
\end{split}
\end{equation*}
where $d_{i}$ is the $i$th diagonal entry of $\mathcal{D}$. According to the proposition assumption, $d_{i}>0$,  we have $\mathcal{A}\mathbf{x}^{m}>0$ for all nonzero vectors $\mathbf{x}$.

Since the Jacobian matrix $\nabla F(\mathbf{x})$ is $(m-1)\mathcal{A}\mathbf{x}^{m-2}$, for any nonzero vector $\mathbf{z}\in \mathds{R}^{n}$, $\mathbf{z}^{\top}\nabla F(\mathbf{x})\mathbf{z}$ can be expressed by
$$\mathbf{z}^{\top}\nabla F(\mathbf{x})\mathbf{z}=(m-1)\sum\limits_{i=1}^{n}d_{i}y_{i}^{m-2}z_{i}^{2}>0.$$
Hence, the Jacobian matrix $\nabla F(\mathbf{x})$ is positive definite with $\mathbf{x}\neq \mathbf{0}$.
\end{proof}
\begin{theorem}\label{t5}
Suppose that $\mathcal{A}\in ST_{m,n}$. For Question \ref{q1}, the following results hold.
\begin{enumerate}
\item[\rm (a)] If $\mathcal{A}$ is diagonalizable and positive definite, then the $\mathrm{NCP}(\mathbf{q},\mathcal{A})$ has a unique solution,
\item[\rm (b)] If $\mathcal{A}$ is positive definite, then the $\mathrm{NCP}(\mathbf{q},\mathcal{A})$  has a nonempty, compact solution set,
\item[\rm (c)] If $\mathcal{A}$ is strictly copositive with respect to $\mathds{R}_{+}^{n}$, then the $\mathrm{NCP}(\mathbf{q},\mathcal{A})$  has a nonempty, compact solution set.
\end{enumerate}
\end{theorem}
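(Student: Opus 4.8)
The plan is to establish the three parts in the order (c), (b), (a), since (c) carries the existence argument and the other two reduce to it.

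For part (c), I would first apply Theorem~\ref{t3}(a): strict copositivity of $\mathcal{A}$ makes $F(\mathbf{x})=\mathcal{A}\mathbf{x}^{m-1}+\mathbf{q}$ strictly copositive with respect to $\mathds{R}_{+}^{n}$. The key structural fact is that $G(\mathbf{x})=F(\mathbf{x})-F(\mathbf{0})=\mathcal{A}\mathbf{x}^{m-1}$ is positively homogeneous of degree $m-1$, because $G(\lambda\mathbf{x})=\mathcal{A}(\lambda\mathbf{x})^{m-1}=\lambda^{m-1}G(\mathbf{x})$ for $\lambda>0$. Consequently condition~(\ref{e4}) holds as an equality with $c(\lambda)=\lambda^{m-1}$, and $c(\lambda)\to\infty$ as $\lambda\to\infty$ since $m-1\geq1$; Lemma~\ref{l2} then yields a nonempty, compact solution set. (Alternatively, strict copositivity makes $G$ d-regular for every $\mathbf{d}>\mathbf{0}$, so Lemma~\ref{l3} gives the same conclusion with $\alpha=m-1$.)

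Part (b) reduces to part (c): a symmetric positive definite tensor satisfies $\mathcal{A}\mathbf{x}^{m}>0$ for every nonzero $\mathbf{x}\in\mathds{R}^{n}$, hence in particular for every nonzero $\mathbf{x}\in\mathds{R}_{+}^{n}$, so $\mathcal{A}$ is strictly copositive and the conclusion is immediate. For part (a), existence is inherited from (b), since a diagonalizable positive definite tensor is positive definite; the substance is uniqueness. Here I would argue by strict monotonicity of $F$. By Theorem~\ref{t4}, $\nabla F(\mathbf{x})=(m-1)\mathcal{A}\mathbf{x}^{m-2}$ is positive definite for all $\mathbf{x}\neq\mathbf{0}$, so for $\mathbf{x}\neq\mathbf{y}$,
\begin{equation*}
(F(\mathbf{x})-F(\mathbf{y}))^{\top}(\mathbf{x}-\mathbf{y})=\int_{0}^{1}(\mathbf{x}-\mathbf{y})^{\top}\nabla F(\mathbf{y}+t(\mathbf{x}-\mathbf{y}))(\mathbf{x}-\mathbf{y})\,dt>0,
\end{equation*}
the integrand being strictly positive for every $t$ at which $\mathbf{y}+t(\mathbf{x}-\mathbf{y})\neq\mathbf{0}$ and vanishing at most at one value of $t$. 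With strict monotonicity in hand, if $\mathbf{x}_{1}$ and $\mathbf{x}_{2}$ both solve $\mathrm{NCP}(\mathbf{q},\mathcal{A})$, expanding $(\mathbf{x}_{1}-\mathbf{x}_{2})^{\top}(F(\mathbf{x}_{1})-F(\mathbf{x}_{2}))$ and using $\mathbf{x}_{i}\geq\mathbf{0}$, $F(\mathbf{x}_{i})\geq\mathbf{0}$, $\mathbf{x}_{i}^{\top}F(\mathbf{x}_{i})=0$ shows this quantity equals $-\mathbf{x}_{1}^{\top}F(\mathbf{x}_{2})-\mathbf{x}_{2}^{\top}F(\mathbf{x}_{1})\leq0$, forcing $\mathbf{x}_{1}=\mathbf{x}_{2}$.

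I expect uniqueness in part (a) to be the main obstacle. For $m>2$ the Jacobian $\nabla F$ collapses to the zero matrix at the origin and grows without bound at infinity, so its principal minors are neither bounded below by a positive constant nor bounded above; Lemma~\ref{l1} therefore cannot be applied globally on $\mathds{R}_{+}^{n}$, which is the naive route suggested by Theorem~\ref{t4}. The monotonicity argument avoids this because positive definiteness of $\nabla F$ off a single point keeps the line integral strictly positive, the exceptional point contributing measure zero; the only point requiring care is that the segment from $\mathbf{y}$ to $\mathbf{x}$ passes through the origin for at most one value of $t$, which holds because $t\mapsto\mathbf{y}+t(\mathbf{x}-\mathbf{y})$ is injective when $\mathbf{x}\neq\mathbf{y}$.
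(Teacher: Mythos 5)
Your proposal is correct, and parts (b) and (c) coincide with the paper's own proof: Theorem~\ref{t3} supplies (strict) copositivity of $F$, and Lemmas~\ref{l2} and~\ref{l3} (the latter via the d-regularity of $G$ noted just before Lemma~\ref{l3}) give the nonempty compact solution set. You are in fact sharper than the paper on one point: since $G(\mathbf{x})=\mathcal{A}\mathbf{x}^{m-1}$ is positively homogeneous of degree $m-1$, condition~(\ref{e4}) holds with equality for $c(\lambda)=\lambda^{m-1}$; the paper asserts it for $c(\lambda)=\lambda^{\alpha}$ with any $0<\alpha\leq m$, but $[F(\lambda\mathbf{x})-F(\mathbf{0})]^{\top}\mathbf{x}=\lambda^{m-1}\mathcal{A}\mathbf{x}^{m}$, so (\ref{e4}) for all $\lambda\geq1$ forces $\alpha\leq m-1$, and your exponent is the right one. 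The genuine divergence is part (a), which the paper dismisses as ``obvious, according to Lemma~\ref{l1} and Theorem~\ref{t4}.'' Your diagnosis of why that one-liner fails is exactly right: for $m>2$ we have $\nabla F(\mathbf{0})=0$, and homogeneity of degree $m-2$ makes the principal minors of $(m-1)\mathcal{A}\mathbf{x}^{m-2}$ unbounded at infinity, so no $\delta\in(0,1)$ can confine them to $[\delta,\delta^{-1}]$ on all of $\mathds{R}_{+}^{n}$ and Lemma~\ref{l1} is inapplicable; tellingly, the paper's own closing open question asks for conditions guaranteeing precisely these bounds. Your replacement --- existence inherited from (b), uniqueness from strict monotonicity of $F$ via the line integral of $\nabla F$ (positive except at the at most one parameter value where the segment meets the origin), combined with the standard inequality $(\mathbf{x}_{1}-\mathbf{x}_{2})^{\top}(F(\mathbf{x}_{1})-F(\mathbf{x}_{2}))\leq0$ for two NCP solutions --- is sound and genuinely repairs the paper's argument, using nothing beyond Theorem~\ref{t4}, which the paper's route needed anyway.

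One caution about that last dependence. The strict positivity of your integrand at every nonzero point of the segment is exactly Theorem~\ref{t4}, whose statement is itself delicate: correctly expanded, $\mathbf{z}^{\top}\nabla F(\mathbf{x})\mathbf{z}=(m-1)\sum_{i=1}^{n}d_{i}(B^{\top}\mathbf{x})_{i}^{m-2}(B^{\top}\mathbf{z})_{i}^{2}$, which can vanish for nonzero $\mathbf{x},\mathbf{z}$ whenever $(B^{\top}\mathbf{x})_{i}=0$ for every $i$ with $(B^{\top}\mathbf{z})_{i}\neq0$; for instance, with $\mathcal{A}=\mathcal{I}$ and $m=4$, $\nabla F(\mathbf{x})=3\,\mathrm{diag}(x_{1}^{2},\dots,x_{n}^{2})$ is singular on every coordinate hyperplane, not only at $\mathbf{0}$. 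Fortunately your integral argument is robust to this: writing $\mathbf{u}=B^{\top}\mathbf{y}$ and $\mathbf{v}=B^{\top}(\mathbf{x}-\mathbf{y})\neq\mathbf{0}$, the integrand equals $(m-1)\sum_{i}d_{i}(u_{i}+tv_{i})^{m-2}v_{i}^{2}$, and each term with $v_{i}\neq0$ vanishes for at most one value of $t$, so the integrand is positive for all but finitely many $t$ and the integral remains strictly positive. I would therefore state the monotonicity step in this diagonalized form rather than through Theorem~\ref{t4} as a black box, so that your part (a) does not rest on the weakest claim in the paper.
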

\begin{proof}
Part $(a)$: it is obvious, according to Lemma \ref{l1} and Theorem \ref{t4}. We will prove part $(b)$ as follows. Since $\mathcal{A}$ is positive definite, according to Theorem \ref{t3}, we have $F(\mathbf{x})=\mathcal{A}\mathbf{x}^{m-1}+\mathbf{q}$ is strictly copositive. Let $c(\lambda)=\lambda^{\alpha}$ with $0<\alpha\leq m$, we know that $c(\lambda)\rightarrow\infty$ as $\lambda\rightarrow\infty$, then, based on Lemma \ref{l2}, we know that if $\mathcal{A}$ is positive definite, then the $\mathrm{NCP}(\mathbf{q},\mathcal{A})$  has a nonempty, compact solution set.

The rest part is to show that  part $(c)$. By Theorem \ref{t3}, we obtain that $F(\mathbf{x})=\mathcal{A}\mathbf{x}^{m-1}+\mathbf{q}$ is strictly copositive. In \cite{cp}, we have that if $F(\mathbf{x})$ is strictly copositive with respect to $\mathds{R}_{+}^{n}$, then the mapping $G$ in (\ref{e2}) is $d$-regular for any $d>0$. Then, according to Lemma \ref{l3}, if $\mathcal{A}$ is strictly copositive with respect to $\mathds{R}_{+}^{n}$, then the $\mathrm{NCP}(\mathbf{q},\mathcal{A})$  has a nonempty, compact solution set.
\end{proof}
\subsection{Solving Question \ref{q2}}
In the above subsection, we consider the solvability of Question \ref{q1}. Analogously, The following theorems have been described by the solvability of Question \ref{q2}.
\begin{theorem}
Suppose that $\mathcal{A}_{k}\in T_{m-(2k-2),n}$, with $k\in[m/2]$. For Question \ref{q2}, the following results hold.
\begin{enumerate}
\item[\rm (a)] If $\mathcal{A}_{k}$ are diagonalizable and positive definite, then the $\mathrm{GNCP}(\mathbf{q},\{\mathcal{A}_{k}\})$ has a unique solution,
\item[\rm (b)] If $\mathcal{A}_{k}$ are positive definite, then the $\mathrm{GNCP}(\mathbf{q},\{\mathcal{A}_{k}\})$  has a nonempty, compact solution set,
\item[\rm (c)] If $\mathcal{A}_{k}$ are strictly copositive with respect to $\mathds{R}_{+}^{n}$, then the $\mathrm{GNCP}(\mathbf{q},\{\mathcal{A}_{k}\})$  has a nonempty, compact solution set,
\end{enumerate}
where $\mathcal{A}_{m/2}$ is a square matrix.
\end{theorem}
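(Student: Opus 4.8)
The plan is to recognize $\mathrm{GNCP}(\mathbf{q},\{\mathcal{A}_k\})$ as an instance of $\mathrm{NCP}(F)$ for the polynomial mapping
$$F(\mathbf{x})=\sum_{k=1}^{m/2}\mathcal{A}_k\mathbf{x}^{m-(2k-1)}+\mathbf{q},$$
and then to reuse, summand by summand, the machinery already built for Question \ref{q1}. Since $m$ is even, each $\mathcal{A}_k$ has even order $m-2k+2$, so the notions of positive definiteness and (strict) copositivity apply to every term individually. First I would record that $F(\mathbf{0})=\mathbf{q}$ (every tensor term, including the matrix term $\mathcal{A}_{m/2}\mathbf{x}$, carries at least one factor of $\mathbf{x}$) and that the governing scalar form is
$$[F(\mathbf{x})-F(\mathbf{0})]^{\top}\mathbf{x}=\sum_{k=1}^{m/2}\mathcal{A}_k\mathbf{x}^{m-2k+2}.$$
This identity drives all three parts: under each hypothesis every summand $\mathcal{A}_k\mathbf{x}^{m-2k+2}$ is nonnegative on $\mathds{R}_+^n$ and strictly positive for $\mathbf{x}\neq\mathbf{0}$, so $F$ is strictly copositive with respect to $\mathds{R}_+^n$, exactly as in Theorem \ref{t3}(a).

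For parts (b) and (c) I would invoke Lemma \ref{l2}, verifying its growth hypothesis (\ref{e4}) with $c(\lambda)=\lambda$. Expanding gives
$$[F(\lambda\mathbf{x})-F(\mathbf{0})]^{\top}\mathbf{x}=\sum_{k=1}^{m/2}\lambda^{m-2k+1}\,\mathcal{A}_k\mathbf{x}^{m-2k+2},$$
and since every exponent satisfies $m-2k+1\geq 1$, we have $\lambda^{m-2k+1}\geq\lambda$ for all $\lambda\geq 1$; together with the termwise nonnegativity on $\mathds{R}_+^n$ this yields (\ref{e4}) with $c(\lambda)=\lambda\to\infty$, so Lemma \ref{l2} delivers a nonempty, compact solution set in both cases. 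One subtlety I would flag explicitly: the $d$-regular route used for Theorem \ref{t5}(c) through Lemma \ref{l3} does \emph{not} transfer verbatim, because $G(\mathbf{x})=\sum_k\mathcal{A}_k\mathbf{x}^{m-2k+1}$ in (\ref{e2}) is a sum of homogeneous pieces of \emph{different} degrees and is therefore not positively homogeneous. Lemma \ref{l2}, however, needs only the one-sided comparison above, and so it handles (c) as smoothly as (b).

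For part (a) I would follow the template of Theorem \ref{t5}(a): compute the Jacobian
$$\nabla F(\mathbf{x})=\sum_{k=1}^{m/2}(m-2k+1)\,\mathcal{A}_k\mathbf{x}^{m-2k},$$
apply Theorem \ref{t4} to each diagonalizable positive definite $\mathcal{A}_k$ to conclude that every matrix $\mathcal{A}_k\mathbf{x}^{m-2k}$ is positive definite for $\mathbf{x}\neq\mathbf{0}$, and then observe that a combination with strictly positive coefficients $m-2k+1>0$ of positive definite matrices is again positive definite. Lemma \ref{l1} would then yield the unique solution. The main obstacle lies precisely here: Lemma \ref{l1} demands that all principal minors of $\nabla F(\mathbf{x})$ stay uniformly bounded between $\delta$ and $\delta^{-1}$ over all of $\mathds{R}_+^n$, whereas pointwise positive definiteness supplies only positivity. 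In this setting the constant matrix summand $\mathcal{A}_{m/2}$ keeps $\nabla F$ nondegenerate near the origin, so the delicate point is the upper bound $\delta^{-1}$: the higher-degree summands grow without bound as $\|\mathbf{x}\|\to\infty$. Reconciling pointwise positive definiteness with the uniform positively-bounded-Jacobian condition is the step that requires the most care, and I would bridge it by the same device that underlies Theorem \ref{t5}(a), or, if that proves insufficient, by passing to a compact normalization on which the minors are genuinely bounded.
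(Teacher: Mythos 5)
The paper omits its own proof of this theorem, saying only that it follows ``according to the above description,'' i.e., by imitating the proof of Theorem \ref{t5}; so your proposal must be measured against that template. For parts (b) and (c) your argument is correct and, in one respect, a necessary repair of the paper's implied route. The identity $[F(\lambda\mathbf{x})-F(\mathbf{0})]^{\top}\mathbf{x}=\sum_{k}\lambda^{m-2k+1}\mathcal{A}_{k}\mathbf{x}^{m-2k+2}$, the bound $\lambda^{m-2k+1}\geq\lambda$ for $\lambda\geq1$, and the termwise nonnegativity on $\mathds{R}_{+}^{n}$ do verify condition (\ref{e4}) with $c(\lambda)=\lambda$, so Lemma \ref{l2} applies under either hypothesis. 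Your observation that the Lemma \ref{l3} route used for Theorem \ref{t5}(c) breaks down is exactly right: here $G(\mathbf{x})=\sum_{k}\mathcal{A}_{k}\mathbf{x}^{m-2k+1}$ is a sum of homogeneous pieces of distinct degrees, so the positive homogeneity hypothesis of Lemma \ref{l3} fails, and a verbatim ``analogous'' proof of (c) would be invalid; routing (c) through Lemma \ref{l2} instead is the correct fix, and your choice $c(\lambda)=\lambda$ (rather than the $\lambda^{\alpha}$ of Theorem \ref{t5}(b)) is the right uniform choice given that the lowest-degree term is linear.

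Part (a), however, contains a genuine gap, which you name but do not close. Lemma \ref{l1} requires \emph{all} principal minors of $\nabla F(\mathbf{x})=\sum_{k}(m-2k+1)\mathcal{A}_{k}\mathbf{x}^{m-2k}$ to lie in $[\delta,\delta^{-1}]$ \emph{uniformly over} $\mathds{R}_{+}^{n}$, and this simply fails: for $m>2$ any $1\times1$ principal minor, i.e.\ a diagonal entry, grows like $\|\mathbf{x}\|^{m-2}$ along suitable rays, so no upper bound $\delta^{-1}$ can hold. Your two proposed bridges do not exist: ``the same device that underlies Theorem \ref{t5}(a)'' is vacuous, because the paper's proof of Theorem \ref{t5}(a) is itself just the bare citation of Lemma \ref{l1} plus Theorem \ref{t4} and suffers the identical defect (there even the lower bound fails, since $\nabla F(\mathbf{0})=0$ for $m>2$) --- indeed the paper's closing open Question asks precisely under what conditions on $\mathcal{A}$ the principal minors of $\mathcal{A}\mathbf{x}^{m-2}$ can be so bounded; and ``passing to a compact normalization'' is not licensed by Lemma \ref{l1} as stated, absent an a priori bound on solutions of the GNCP, which you do not establish. (The paper's subsequent ``more general'' theorem adds a principal-minor hypothesis on the constant matrix $\mathcal{A}_{m/2}$, which, as you note, rescues nondegeneracy at the origin, but it still does nothing about the blow-up at infinity.) A further inherited soft spot: Theorem \ref{t4}, which you apply to each summand, proves only $\sum_{i}d_{i}y_{i}^{m-2}w_{i}^{2}>0$ with $\mathbf{y}=B^{\top}\mathbf{x}$, which can vanish when some $y_{i}=0$, so even pointwise positive definiteness of each $\mathcal{A}_{k}\mathbf{x}^{m-2k}$ is not fully secured. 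In short: (b) and (c) are sound and improve on the omitted ``analogous'' proof; (a) is not proved, by you or by the paper.
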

It is easy to prove this theorem according to the above description, then, we omit the proof. Based on Lemma \ref{l1}, because $\mathcal{A}_{m/2}$ is a square matrix, so the constraints of matrix $\mathcal{A}_{m/2}$ can be weakened. Hence, a more general result is given as follows.
\begin{theorem}
Suppose that $\mathcal{A}_{k}\in T_{m-(2k-2),n}$, with $k\in[m/2-1]$ and $\mathcal{A}_{m/2}$ is a square matrix. For Question \ref{q2}, the following results hold.
\begin{enumerate}
\item[\rm (a)] If $\mathcal{A}_{k}$ are diagonalizable and positive definite and there exists one $\delta\in(0,1)$, such that all principal minors of $\mathcal{A}_{m/2}$ are bounded between $\delta$ and $\delta^{-1}$, then the $\mathrm{GNCP}(\mathbf{q},\{\mathcal{A}_{k}\})$ has a unique solution,
\item[\rm (b)] If $\mathcal{A}_{k}$ are positive definite and $\mathcal{A}_{m/2}$ is strictly copositive with respect to $\mathds{R}_{+}^{n}$, then the $\mathrm{GNCP}(\mathbf{q},\{\mathcal{A}_{k}\})$  has a nonempty, compact solution set,
\item[\rm (c)] If $\mathcal{A}_{k}\ (k\in[m/2])$ are strictly copositive with respect to $\mathds{R}_{+}^{n}$, then the $\mathrm{GNCP}(\mathbf{q},\{\mathcal{A}_{k}\})$  has a nonempty, compact solution set.
\end{enumerate}
\end{theorem}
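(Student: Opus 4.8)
The plan is to reduce all three parts to the lemmas of Section 2 applied to the polynomial map $F(\mathbf{x})=\sum_{k=1}^{m/2}\mathcal{A}_k\mathbf{x}^{m-(2k-1)}+\mathbf{q}$, exactly as for Theorem \ref{t5}, the only novelty being that the relaxed hypotheses now sit on the matrix slice $\mathcal{A}_{m/2}$. First I would record three facts once and for all: $F$ is a polynomial map, hence continuous and continuously differentiable; since the lowest-order summand is $\mathcal{A}_{m/2}\mathbf{x}$, every term vanishes at the origin, so $F(\mathbf{0})=\mathbf{q}$ and $G(\mathbf{x}):=F(\mathbf{x})-F(\mathbf{0})=\sum_{k=1}^{m/2}\mathcal{A}_k\mathbf{x}^{m-(2k-1)}$; and, because $m$ is even, each order $m-2k+2$ is even, so the scalar $G(\mathbf{x})^{\top}\mathbf{x}=\sum_{k=1}^{m/2}\mathcal{A}_k\mathbf{x}^{m-2k+2}$ appearing in the complementarity condition of Question \ref{q2} is a sum of genuine even forms, to which the copositivity and positive-definiteness hypotheses apply termwise.

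For parts (b) and (c) I would invoke Lemma \ref{l2}. Strict copositivity of $F$ follows termwise: under (c) each $\mathcal{A}_k\mathbf{x}^{m-2k+2}>0$ on $\mathds{R}_+^n\setminus\{\mathbf{0}\}$, while under (b) a positive definite $\mathcal{A}_k$ gives $\mathcal{A}_k\mathbf{x}^{m-2k+2}>0$ on all of $\mathds{R}^n\setminus\{\mathbf{0}\}$ and the strictly copositive matrix $\mathcal{A}_{m/2}$ gives $\mathcal{A}_{m/2}\mathbf{x}^{2}>0$ on $\mathds{R}_+^n\setminus\{\mathbf{0}\}$; summing, $G(\mathbf{x})^{\top}\mathbf{x}>0$. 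The essential point, which distinguishes Question \ref{q2} from Question \ref{q1}, is that $G$ is no longer positively homogeneous, so Lemma \ref{l3} is unavailable and I would instead verify hypothesis (\ref{e4}) of Lemma \ref{l2} by hand. For $\lambda\ge 1$,
\[
[F(\lambda\mathbf{x})-F(\mathbf{0})]^{\top}\mathbf{x}=\sum_{k=1}^{m/2}\lambda^{m-2k+1}\mathcal{A}_k\mathbf{x}^{m-2k+2}\ \ge\ \lambda\sum_{k=1}^{m/2}\mathcal{A}_k\mathbf{x}^{m-2k+2},
\]
because every exponent obeys $m-2k+1\ge 1$ and every summand is nonnegative on $\mathds{R}_+^n$. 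Hence (\ref{e4}) holds with $c(\lambda)=\lambda\to\infty$, and Lemma \ref{l2} yields a nonempty, compact solution set in both cases.

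For part (a) I would differentiate to get $\nabla F(\mathbf{x})=\mathcal{A}_{m/2}+\sum_{k=1}^{m/2-1}(m-2k+1)\mathcal{A}_k\mathbf{x}^{m-2k}$ and aim at Lemma \ref{l1}. Running the computation of Theorem \ref{t4} on each diagonalizable positive definite $\mathcal{A}_k$ shows that $\mathcal{A}_k\mathbf{x}^{m-2k}$ is positive semidefinite on $\mathds{R}_+^n$ (positive definite wherever $B_k^{\top}\mathbf{x}$ has no zero coordinate), so $\nabla F(\mathbf{x})$ is a positive semidefinite perturbation of $\mathcal{A}_{m/2}$; at the boundary point $\mathbf{x}=\mathbf{0}$ the perturbation disappears and $\nabla F(\mathbf{0})=\mathcal{A}_{m/2}$, which is precisely why the hypothesis is imposed on $\mathcal{A}_{m/2}$ alone. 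The plan is then to argue that the principal minors of $\nabla F(\mathbf{x})$ stay positive throughout $\mathds{R}_+^n$ and remain two-sidedly bounded thanks to the $\delta$-control on the minors of $\mathcal{A}_{m/2}$, and to quote Lemma \ref{l1} for uniqueness.

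The hard part will be this last bounding step. Lemma \ref{l1} requires every principal minor of $\nabla F(\mathbf{x})$ to lie in $[\delta,\delta^{-1}]$ uniformly over the unbounded set $\mathds{R}_+^n$, yet the entries of $\mathcal{A}_k\mathbf{x}^{m-2k}$ grow polynomially as $\|\mathbf{x}\|\to\infty$, so the minors need not remain below a fixed $\delta^{-1}$; positivity and the lower bound come cheaply from positive (semi)definiteness, but the uniform upper bound is the genuine obstacle, and it is exactly the point passed over in the analogous argument for Theorem \ref{t5}(a). I would try to close the gap either by reading Lemma \ref{l1} as demanding only positive-definiteness of the Jacobian on the interior together with the $\delta$-bound supplied by $\mathcal{A}_{m/2}$ at the boundary, or, more carefully, by first using the strict copositivity established in (b) to trap every solution inside a fixed compact set and establishing the two-sided minor bound only there. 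Reconciling that localization with the global statement of Lemma \ref{l1} is the crux of the argument.
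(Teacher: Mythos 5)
The paper states this theorem without proof (it only remarks that the constraints on the matrix slice $\mathcal{A}_{m/2}$ ``can be weakened'' on account of Lemma \ref{l1}), so the relevant comparison is with the intended argument behind Theorem \ref{t5}. For parts (b) and (c) your proposal is correct and is in fact more careful than the paper's implicit reasoning: you rightly notice that for Question \ref{q2} the map $G(\mathbf{x})=\sum_{k}\mathcal{A}_{k}\mathbf{x}^{m-(2k-1)}$ mixes degrees $m-2k+1$ and is therefore \emph{not} positively homogeneous, so Lemma \ref{l3} --- the tool the paper uses for Theorem \ref{t5}(c) --- is simply unavailable here, and the theorem must be routed through Lemma \ref{l2}. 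Your direct verification of condition (\ref{e4}) with $c(\lambda)=\lambda$, using that every exponent satisfies $m-2k+1\geq 1$ and every summand $\mathcal{A}_{k}\mathbf{x}^{m-2k+2}$ is nonnegative on $\mathds{R}_{+}^{n}$ under the stated hypotheses, is exactly the right fix, and the termwise strict copositivity argument is sound. This is a genuine improvement over the paper's one-line dismissal.

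For part (a), however, there is a real gap, which to your credit you name yourself but do not close, and neither of your two suggested escapes works. Lemma \ref{l1} is a \emph{global} statement: it demands that every principal minor of $\nabla F(\mathbf{x})$ lie in $[\delta,\delta^{-1}]$ for \emph{all} $\mathbf{x}\in\mathds{R}_{+}^{n}$. For $m>2$ this hypothesis can never be met by $\nabla F(\mathbf{x})=\mathcal{A}_{m/2}+\sum_{k=1}^{m/2-1}(m-2k+1)\mathcal{A}_{k}\mathbf{x}^{m-2k}$: along the ray $\mathbf{x}=t\mathbf{e}_{1}$ the $(1,1)$ entry grows like $(m-1)t^{m-2}(\mathcal{A}_{1})_{1\cdots 1}$, and $(\mathcal{A}_{1})_{1\cdots 1}=\mathcal{A}_{1}\mathbf{e}_{1}^{m}>0$ by positive definiteness, so the order-one principal minors are unbounded and the upper bound $\delta^{-1}$ fails. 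Reading Lemma \ref{l1} as requiring only interior positive definiteness is not justified by its statement, and localizing to a compact set obtained from part (b) contradicts the lemma's global hypothesis --- a localized variant would be a different theorem needing its own proof (the natural substitute is a uniqueness result for P-functions or for strictly monotone maps, not Lemma \ref{l1}). You should know that this defect is inherited from the paper itself: its proof of Theorem \ref{t5}(a) (``obvious, according to Lemma \ref{l1} and Theorem \ref{t4}'') establishes only positive definiteness of the Jacobian, never the two-sided minor bounds, and the open question closing the paper tacitly concedes that it is unknown when those bounds hold. So your proposal reproduces the paper's gap in part (a) rather than introducing a new one, while honestly flagging it --- but as written, part (a) remains unproved.
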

\begin{remark}
In the above two theorems, the assumptions of $\mathcal{A}_{k}\in T_{m-(2k-2),n}$, with $k\in[m/2]$ can be appropriately reduced. However, we don't consider these situations.
\end{remark}
\section{Conclusion}
In this note, by some structured tensors, the main task is to consider the existence and uniqueness about the solution of Questions \ref{q1} and \ref{q2}. Now we  present two conjectures about the $\mathrm{NCP}(\mathbf{q},\mathcal{A})$.
\begin{conjecture}
For the part $(\rm a)$ in Theorem \ref{t5}, if $\mathcal{A}\in ST_{m,n}$ is just positive definite, then the $NCP(\mathbf{q},\mathcal{A})$  has a unique solution.
\end{conjecture}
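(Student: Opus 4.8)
The plan is to split the statement into existence and uniqueness. Existence, together with compactness of the solution set, is already supplied by Theorem~\ref{t5}(b): a positive definite $\mathcal{A}$ makes $F(\mathbf{x})=\mathcal{A}\mathbf{x}^{m-1}+\mathbf{q}$ strictly copositive, so $\mathrm{NCP}(\mathbf{q},\mathcal{A})$ is solvable. Hence the whole content of the conjecture is \emph{uniqueness}, and I would attack it by a monotonicity argument. Suppose $\mathbf{x}$ and $\mathbf{y}$ both solve $\mathrm{NCP}(\mathbf{q},\mathcal{A})$. Using $\mathbf{x},\mathbf{y}\geq\mathbf{0}$ and $F(\mathbf{x}),F(\mathbf{y})\geq\mathbf{0}$ together with the complementarity identities $\mathbf{x}^{\top}F(\mathbf{x})=0$ and $\mathbf{y}^{\top}F(\mathbf{y})=0$, one computes $(\mathbf{x}-\mathbf{y})^{\top}(F(\mathbf{x})-F(\mathbf{y}))=-\mathbf{x}^{\top}F(\mathbf{y})-\mathbf{y}^{\top}F(\mathbf{x})\leq 0$, since each inner product on the right is nonnegative. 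Therefore, if I could prove that $F$ is \emph{strictly monotone} on $\mathds{R}_{+}^{n}$, i.e. $(\mathbf{x}-\mathbf{y})^{\top}(F(\mathbf{x})-F(\mathbf{y}))>0$ whenever $\mathbf{x}\neq\mathbf{y}$, the displayed inequality would force $\mathbf{x}=\mathbf{y}$, giving uniqueness.

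Next I would reduce strict monotonicity to a pointwise condition. Writing $\mathbf{z}(t)=\mathbf{y}+t(\mathbf{x}-\mathbf{y})$ and recalling from Theorem~\ref{t4} that $\nabla F(\mathbf{z})=(m-1)\mathcal{A}\mathbf{z}^{m-2}$, the fundamental theorem of calculus yields $(\mathbf{x}-\mathbf{y})^{\top}(F(\mathbf{x})-F(\mathbf{y}))=(m-1)\int_{0}^{1}(\mathbf{x}-\mathbf{y})^{\top}\mathcal{A}\mathbf{z}(t)^{m-2}(\mathbf{x}-\mathbf{y})\,dt$. So it would suffice to show that $\mathcal{A}\mathbf{z}^{m-2}$ is positive definite for every nonzero $\mathbf{z}$ along the segment. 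Under the diagonalizability hypothesis this is precisely what Theorem~\ref{t4} establishes, which is exactly why uniqueness was obtainable in Theorem~\ref{t5}(a) via Lemma~\ref{l1}.

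The main obstacle is that this last step \emph{fails} for a general positive definite symmetric tensor. Positive definiteness only asserts $\mathcal{A}\mathbf{w}^{m}>0$ for $\mathbf{w}\neq\mathbf{0}$; equivalently the potential $\Phi(\mathbf{x})=\tfrac{1}{m}\mathcal{A}\mathbf{x}^{m}+\mathbf{q}^{\top}\mathbf{x}$, whose gradient is $F$, so that $\mathrm{NCP}(\mathbf{q},\mathcal{A})$ is exactly the KKT system of $\min_{\mathbf{x}\geq\mathbf{0}}\Phi(\mathbf{x})$, is coercive, because $\mathcal{A}\mathbf{x}^{m}\geq\mu\|\mathbf{x}\|_{2}^{m}$ for some $\mu>0$ dominates the linear term. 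However a positive definite even-degree form need not be convex: its Hessian $m(m-1)\mathcal{A}\mathbf{z}^{m-2}$ may be indefinite at some $\mathbf{z}$, so $F$ need not be monotone and $\Phi$ can possess several stationary points on $\mathds{R}_{+}^{n}$, each of which solves the NCP. Thus the quadratic-form route cannot close in general, and the genuine difficulty is to exclude these spurious, non-global KKT points.

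I would therefore pursue two fallbacks, while flagging that the conjecture may need an extra hypothesis (or may even be false as stated). First, a \emph{restricted} monotonicity attempt: exploit the equality conditions $\mathbf{x}^{\top}F(\mathbf{y})=\mathbf{y}^{\top}F(\mathbf{x})=0$ forced by the inequality above to confine $\mathbf{x}-\mathbf{y}$ to the active faces and to check whether the integrand stays nonnegative there even when $\mathcal{A}\mathbf{z}^{m-2}$ is globally indefinite. Second, a degree-theoretic homotopy: since the solution set is nonempty and compact, one could deform $\mathbf{q}$ to a reference value with a single known solution and try to show the topological degree is $+1$ throughout; this, however, again requires a Jacobian nonsingularity input that positive definiteness alone does not furnish. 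My honest expectation is that the convexity gap is essential, so the likeliest outcomes are either to strengthen the hypothesis to one forcing $\mathcal{A}\mathbf{z}^{m-2}$ positive definite (recovering Theorem~\ref{t5}(a)) or to exhibit a positive definite, non-diagonalizable $\mathcal{A}$ and a $\mathbf{q}$ admitting two distinct NCP solutions, which would refute the conjecture.
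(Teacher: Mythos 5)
The statement you were asked to prove is labeled a \emph{conjecture} in the paper, and the paper supplies no proof of it whatsoever --- it is posed as an open problem (together with the final open Question asking exactly which conditions on $\mathcal{A}$ would deliver the principal-minor bounds that Lemma~\ref{l1} needs). So there is no paper proof to compare against, and the honest question is whether your proposal settles the conjecture. It does not, and you say so explicitly; what you have written is a correct diagnosis of why the conjecture is open rather than a proof. Your individual steps check out: for two solutions $\mathbf{x},\mathbf{y}$ the identity $(\mathbf{x}-\mathbf{y})^{\top}(F(\mathbf{x})-F(\mathbf{y}))=-\mathbf{x}^{\top}F(\mathbf{y})-\mathbf{y}^{\top}F(\mathbf{x})\leq 0$ is right; the integral representation reducing strict monotonicity to positive definiteness of $\mathcal{A}\mathbf{z}(t)^{m-2}$ along the segment is right (for symmetric $\mathcal{A}$, so that $\nabla F(\mathbf{z})=(m-1)\mathcal{A}\mathbf{z}^{m-2}$); and your central observation --- that a positive definite even-degree form need not be convex, e.g. $x_{1}^{4}+x_{2}^{4}-x_{1}^{2}x_{2}^{2}$ is positive definite but has indefinite Hessian at $(0,1)$ --- is precisely the obstruction that makes the diagonalizability hypothesis in Theorem~\ref{t5}(a) non-removable by the monotonicity/Lemma~\ref{l1} route. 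This matches the paper's implicit logic for leaving the statement as a conjecture.

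Two clarifications so you do not overclaim or misattribute. First, your closing guess that the conjecture ``may even be false as stated'' is legitimate but unresolved: the paper offers no counterexample, and the counterexamples of Cottle that the paper cites concern the \emph{other} conjecture (the KKT existence statement, Conjecture~2), not this uniqueness conjecture --- do not conflate the two. Second, your fallback routes (restricted monotonicity on the active faces, and a degree-theoretic homotopy using nonemptiness and compactness of the solution set from Theorem~\ref{t5}(b)) go beyond anything in the paper and are reasonable directions, but as you note, both still require a Jacobian nondegeneracy input that positive definiteness alone does not supply, so neither closes the argument. In short: no step of yours is wrong, the gap you name is the genuine one, and your proposal should be read as a correct account of why the problem is open, not as a proof.
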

\begin{conjecture}
If $\mathrm{FEA}(\mathbf{q},\mathcal{A})\neq\emptyset$, then the nonlinear program (\ref{e5}) has an optimal solution, $\mathbf{x}_{*}$. Moreover, there exists a vector $\mathbf{u}_{*}$ of multipliers satisfying the conditions,
\begin{equation*}
\begin{split}
\mathbf{q}+m\mathcal{A}\mathbf{x}_{*}^{m-1}-(m-1)\mathcal{A}\mathbf{x}_{*}^{m-2}\mathbf{u}_{*}&\geq\mathbf{0}\\
\mathbf{x}_{*}^{\top}(\mathbf{q}+m\mathcal{A}\mathbf{x}_{*}^{m-1}-(m-1)\mathcal{A}\mathbf{x}_{*}^{m-2}\mathbf{u}_{*})&=0\\
\mathbf{u}_{*}&\geq\mathbf{0}\\
\mathbf{u}_{*}^{\top}(\mathbf{q}+\mathcal{A}\mathbf{x}_{*}^{m-1})&=0.
\end{split}
\end{equation*}
Finally, the vectors $\mathbf{x}_{*}$ and $\mathbf{u}_{*}$ satisfy
\begin{equation*}
(m-1)(\mathbf{x}_{*}-\mathbf{u}_{*})_{i}(\mathcal{A}\mathbf{x}_{*}^{m-2}(\mathbf{x}_{*}-\mathbf{u}_{*}))_{i}\leq0, \quad i\in[n].
\end{equation*}
\end{conjecture}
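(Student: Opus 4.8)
The plan is to split the statement into two parts: the existence of an optimal solution $\mathbf{x}_{*}$ of the nonlinear program (\ref{e5}), and the multiplier/complementarity conclusions (\ref{e6})--(\ref{e7}). The second part comes for free, because a global optimal solution is in particular a local solution of (\ref{e5}); hence, once existence is secured, Theorem \ref{t1} applies verbatim and delivers both the Karush--Kuhn--Tucker system (\ref{e6}) and the componentwise inequalities (\ref{e7}). So the whole content of the conjecture collapses to a single claim: the objective of (\ref{e5}) attains its minimum over $\mathrm{FEA}(\mathbf{q},\mathcal{A})$ whenever this feasible set is nonempty.

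First I would record the two structural facts that make the problem well posed. The set $\mathrm{FEA}(\mathbf{q},\mathcal{A})=\{\mathbf{x}\in\mathds{R}_{+}^{n}:\mathcal{A}\mathbf{x}^{m-1}+\mathbf{q}\in\mathds{R}_{+}^{n}\}$ is closed, being cut out by the continuous constraints $\mathbf{x}\geq\mathbf{0}$ and $\mathcal{A}\mathbf{x}^{m-1}+\mathbf{q}\geq\mathbf{0}$. Moreover the objective is bounded below on this set, since for feasible $\mathbf{x}$ one has $\mathcal{A}\mathbf{x}^{m}+\mathbf{q}^{\top}\mathbf{x}=\mathbf{x}^{\top}(\mathcal{A}\mathbf{x}^{m-1}+\mathbf{q})\geq 0$, a sum of products of nonnegative quantities. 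Thus $f_{*}:=\inf\{\mathcal{A}\mathbf{x}^{m}+\mathbf{q}^{\top}\mathbf{x}:\mathbf{x}\in\mathrm{FEA}(\mathbf{q},\mathcal{A})\}$ is finite and lies in $[0,\infty)$. Choosing a minimizing sequence $\mathbf{x}^{(k)}\in\mathrm{FEA}(\mathbf{q},\mathcal{A})$ with objective tending to $f_{*}$, the benign case is that some subsequence stays bounded: closedness of the feasible set and continuity of the objective then produce a limit point attaining $f_{*}$, which is the desired $\mathbf{x}_{*}$.

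The hard part, and the reason the statement is posed as a conjecture, is the unbounded case. If every minimizing sequence escapes to infinity, I would normalize by setting $\mathbf{d}^{(k)}=\mathbf{x}^{(k)}/\|\mathbf{x}^{(k)}\|_{2}$ and passing to a subsequence with $\mathbf{d}^{(k)}\to\mathbf{d}$, $\|\mathbf{d}\|_{2}=1$, $\mathbf{d}\geq\mathbf{0}$. Dividing the feasibility relation by $\|\mathbf{x}^{(k)}\|_{2}^{m-1}$ and letting $k\to\infty$ yields $\mathcal{A}\mathbf{d}^{m-1}\geq\mathbf{0}$, while boundedness of $\|\mathbf{x}^{(k)}\|_{2}^{m}\,\mathcal{A}(\mathbf{d}^{(k)})^{m}+\|\mathbf{x}^{(k)}\|_{2}\,\mathbf{q}^{\top}\mathbf{d}^{(k)}$ (the degree-$m$ term would otherwise dominate the linear one and force the objective to $+\infty$) forces $\mathcal{A}\mathbf{d}^{m}\leq 0$; combined with $\mathbf{d}\geq\mathbf{0}$ this produces a degenerate recession direction with $\mathcal{A}\mathbf{d}^{m}=\mathbf{d}^{\top}\mathcal{A}\mathbf{d}^{m-1}=0$. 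To finish one must exclude such directions or retract the sequence to a bounded minimizer, and this is exactly where extra structure on $\mathcal{A}$ appears unavoidable: when $\mathcal{A}$ is positive definite no nonzero $\mathbf{d}$ satisfies $\mathcal{A}\mathbf{d}^{m}=0$, the objective is coercive on $\mathds{R}_{+}^{n}$, and existence is immediate (this is the mechanism already exploited in Theorem \ref{t5}(b)). For a general $\mathcal{A}$ I expect the main obstacle to be controlling the interplay between the form $\mathcal{A}\mathbf{x}^{m}$ and the linear term $\mathbf{q}^{\top}\mathbf{x}$ along the recession cone of $\mathrm{FEA}(\mathbf{q},\mathcal{A})$; a promising route is to use that both the feasible set and the objective are semialgebraic and to invoke attainment criteria for polynomials bounded below on closed semialgebraic sets, but identifying the minimal hypothesis on $\mathcal{A}$ under which the infimum is genuinely achieved is the crux the conjecture leaves open.
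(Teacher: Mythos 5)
This statement is posed in the paper as a \emph{conjecture}: the paper contains no proof of it, and immediately after stating it the authors note that Cottle exhibited counterexamples showing that statements of this type fail for general nonlinear programs (the $m=2$ case being the classical attainment result for the LCP quadratic program, which rests on the Frank--Wolfe theorem for quadratics over polyhedra). So there is no paper proof to match your attempt against, and your honest conclusion --- that the whole statement reduces to attainment of the infimum of (\ref{e5}) over $\mathrm{FEA}(\mathbf{q},\mathcal{A})$, and that this attainment is exactly what remains open --- is the correct assessment rather than a defect. Your partial analysis is sound as far as it goes: the feasible set is closed, the objective equals $\mathbf{x}^{\top}(\mathcal{A}\mathbf{x}^{m-1}+\mathbf{q})\geq 0$ on it so the infimum is finite, and your recession argument correctly extracts from any unbounded minimizing sequence a direction $\mathbf{d}\geq\mathbf{0}$, $\|\mathbf{d}\|_{2}=1$, with $\mathcal{A}\mathbf{d}^{m-1}\geq\mathbf{0}$ and $\mathcal{A}\mathbf{d}^{m}=0$. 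The reason this cannot be pushed further without extra hypotheses on $\mathcal{A}$ is precisely that Frank--Wolfe-type attainment fails for polynomials of degree greater than two over non-polyhedral (here, semialgebraic) feasible sets; a polynomial bounded below on such a set need not achieve its infimum. The gap you leave open is therefore a genuine open problem, and your observation that positive definiteness of $\mathcal{A}$ restores coercivity and hence existence is consistent with what the paper actually proves in Theorem \ref{t5}.

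One caveat concerns the half you describe as coming ``for free.'' Deducing the system (\ref{e6}) from local optimality via Theorem \ref{t1} tacitly requires a constraint qualification at $\mathbf{x}_{*}$, because the constraint $\mathcal{A}\mathbf{x}^{m-1}+\mathbf{q}\geq\mathbf{0}$ is a nonlinear (degree $m-1$) polynomial system; without regularity, a local minimizer need not admit Karush--Kuhn--Tucker multipliers at all. The paper's Theorem \ref{t1} itself omits this hypothesis, so you inherit a looseness already present in the paper, but you should not present the multiplier conclusions as following ``verbatim'': even granting existence of a global minimizer, the multiplier part of the conjecture carries the same unstated regularity assumption, and Cottle's counterexamples cited right after the conjecture are a warning that this is where general nonlinear programs can break. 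With that caveat recorded, your decomposition of the problem and your identification of the crux coincide with the paper's own framing of why the statement is left as a conjecture rather than proved.
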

When $m=2$, this is the theorem about the existence result for a solution of the quadratic program associated with the linear complementarity problem given in \cite{lcp}. Unfortunately, Cottle \cite{Jacobians_ncp} presented some counter examples to explain that this conjecture is not true for the general nonlinear programming.

Finally, for the existence and uniqueness about the solution of Question \ref{q1}, we have an open question given as follows.
\begin{question}
Suppose $\mathcal{A}\in T_{m,n}$ and nonzero $\mathbf{x}\in\mathds{R}^{n}$. What conditions of $\mathcal{A}$ will make sure that there exists one $\delta\in(0,1)$, such that all principal minors of matrix $\mathcal{A}\mathbf{x}^{m-2}$ are bounded between $\delta$ and $\delta^{-1}$, for all $\mathbf{x}\in\mathds{R}_{+}^{n}$?
\end{question}

\newpage

{\small
\bibliographystyle{siam}
\bibliography{specialncp}
}
\end{document}